\newtheorem{thm}{Theorem}
\newtheorem{pro}[thm]{Proposition}
\newtheorem{lem}[thm]{Lemma}
\newtheorem{cor}[thm]{Corollary}
\newtheorem{con}[thm]{Conjecture}
\theoremstyle{definition}
\newtheorem{dfn}[thm]{Definition}
\newtheorem{rem}[thm]{Remark}
\newtheorem{exa}[thm]{Example}
\DeclareMathOperator{\rk}{rk}
\DeclareMathOperator{\Gr}{Gr}
\DeclareMathOperator{\Hom}{Hom}
\DeclareMathOperator{\NE}{NE}
\DeclareMathOperator{\Spec}{Spec}
\DeclareMathOperator{\SCr}{SCr}
\DeclareMathOperator{\SL}{SL}
\DeclareMathOperator{\divi}{div}
\DeclareMathOperator{\Newt}{Newt}
\DeclareMathOperator{\Supp}{Supp}
\newcommand{\fX}{\mathfrak{X}}
\newcommand{\Xbar}{\overline{X}}
\newcommand{\Bbar}{\overline{B}}
\newcommand{\fB}{\mathfrak{B}}
\newcommand{\fP}{\mathfrak{P}}
\newcommand{\fF}{\mathfrak{F}}
\newcommand{\oo}{\mathcal{O}}
\newcommand{\QQ}{\mathbb{Q}}
\newcommand{\RR}{\mathbb{R}}
\newcommand{\PP}{\mathbb{P}}
\newcommand{\ZZ}{\mathbb{Z}}
\newcommand{\CC}{\mathbb{C}}
\renewcommand{\AA}{\mathbb{A}}
\newcommand{\TT}{\mathbb{T}}
\newcommand{\cA}{\mathcal{A}}
\newcommand{\cL}{\mathcal{L}}
\begin{document}

\renewcommand{\thefootnote}{\fnsymbol{footnote}}

\title[Cluster Varieties and Toric Specializations of Fano
Varieties]{Cluster Varieties and Toric Specializations \\ of Fano
  Varieties}

\dedicatory{To V.\ V.\ Shokurov on the occasion of his
  70\textsuperscript{th} birthday}

\author{Alessio Corti}

\address{Department of Mathematics, Imperial College London, London SW7
  2AZ UK}

\email{a.corti@imperial.ac.uk}
%\urladdr{http://geometry.ma.ic.ac.uk/acorti/}

\date{}

\begin{abstract} \noindent I state a conjecture describing the set
  of toric specializations of a Fano variety with klt singularities. The conjecture asserts
  that for all generic Fano varieties $X$ with klt singularities there exists a polarized
  cluster variety $U$ and a surjection from the set of
  torus charts on $U$ to the set of toric specializations of $X$. 

  I outline the first steps of a theory of the cluster varieties that
  I use. In dimension $2$, I sketch a proof of the conjecture after
  Kasprzyk--Nill--Prince, Lutz and Hacking by way of work of
  Lai--Zhou. This reveals
  a surprising structure to the classification of log del
  Pezzo surfaces that was first conjectured in \cite{MR3430830}. In
  higher dimensions, I survey the evidence from the Fanosearch
  program, cluster structures for Grassmannians and flag varieties,
  and moduli spaces of conformal blocks.
\end{abstract}

\subjclass[2020]{14J33 (Primary); 14J45; 13F60 (Secondary)}

\maketitle

\tableofcontents{}

\newpage

\section{Introduction}
%\addcontentsline{toc}{section}{Introduction}
\label{sec:introduction}

I work in characteristic zero only; I work over $\CC$ unless stated otherwise.

\smallskip

A first aim of this paper is to state Conjecture~\ref{conjecture_main}
below, and sketch a proof in dimension~$2$. The conjecture
asserts that for all generic Fano varieties $X$ with klt singularities there exists a
cluster variety $U$ and a surjection from the set of torus charts of
$U$ to the set of toric specializations\footnote{Here specialization
  --- see Definition~\ref{dfn:toric_specialisation} below --- is
  synonymous with degeneration. I only consider projective normal
  specializations: since these are mild, I prefer the term
  specialization to degeneration.} of $X$.

All these words will be defined shortly, but I warn you in advance
that the cluster varieties in use here are
more general than those that can be found elsewhere in the literature,
see below for additional comments on this point. (The most important difference
is that our exchange relations involve Laurent polynomials, not just binomials.)

The paper leaves untouched the question of whether or not, or under what
conditions, such a specialization exists.

If algebra is your thing, as opposed to geometry, please read on! The
ring of global functions $H^0(U, \oo_U)$ is a \emph{Laurent phenomenon
  algebra}~\cite{MR3472916}, and a torus chart on $U$ corresponds to a
\emph{seed} in the language of cluster algebras, see
Sec.~\ref{sec:clusters}. Our conjecture is a vast generalization of
the correspondence between toric specializations of a (Fano) variety
and seeds of a cluster algebra, of which there are many examples in
the literature on cluster algebras. I, however, arrived at the
statement coming from a different place: the Fanosearch program, whose
aim is to classify Fano varieties by first classifying their toric
specializations.

A second aim was to give a definition of cluster varieties more
general than that previously given in~\cite[Sec.~2]{MR3350154}, and to
outline a theory of these varieties: this is done in
Sec.~\ref{sec:clusters}. This material is not very difficult, and not
really original --- for example, it was known independently by (at
least) Sean Keel --- but it is not, I think, in print. 

% Another aim was to connect Conjecture~I with birational geometry, and
% more specifically with Conjecture~II below. The connection is made
% explicit in Corollary~\ref{cor:ofAandB}, and in the proof of
% Conjecture~I for surfaces in Sec.~\ref{sec:surfaces}. That corollary
% seems to assert a kind of \textsl{Shokurov connectedness principle}
% for the non-klt locus in the boundary of the moduli stack of Fano
% varieties.

Sec.~\ref{sec:surfaces} sketches a proof of Conjecture~\ref{conjecture_main} in
dimension~$2$. I also show that, in dimension~$2$, the conjecture is
equivalent to~\cite[Conjecture~A]{MR3430830}. That conjecture was one
of two cornerstones of a new point of view on the classification of
log del Pezzo surfaces inspired by mirror symmetry --- the other
being~\cite[Conjecture~B]{MR3430830}. The results of this paper
complete a substantial part of that program, revealing a
surprising structure to the classification of log del Pezzo
surfaces. 

In the final Sec.~\ref{sec:evidence} I collect some evidence. 

\subsection{Cluster Varieties and toric specializations of Fano
  varieties}
\label{sec:clust-vari-toric}

The goal of this section is to state our main Conjecture~\ref{conjecture_main}. The
statement relates the worlds of Fano varieties and cluster
varieties. I begin by introducing cluster varieties.

\medskip

Fix a torsion free $\ZZ$-module $N\cong \ZZ^n$ and dual $\ZZ$-module
$M=\Hom (N, \ZZ)$. At all time in the paper this choice remains in
force. Below I denote by $\TT^\star$ the $n$-dimensional torus with
character group $N$:
\[
  \TT^\star =\Spec \CC [N]
\]
(I sometimes refer to this torus as the \emph{dual torus}. I make an
effort of not confusing this torus with the torus $\TT=\Spec \CC[M]$,
referred to simply as \emph{the torus}.)
% I denote
% by $\Omega$ the unique invariant holomorphic volume form on
% $\TT^\star$ normalized such that $\oint_\gamma \Omega =1$ --- where
% $\gamma \in H_n(\TT^\star; \ZZ)=\ZZ$ is the canonical
% generator.\footnote{Choosing coordinates $x_1,\dots, x_n$ on
%   $\TT^\star$,
%   $\Omega = \bigl( \frac1{2\pi \texttt{i}} \bigr)^n \; \frac{dx_1
%     \wedge \cdots \wedge dx_n}{x_1\cdots x_n}$.}

\begin{dfn}
  \label{dfn:near_inclusion}
  Let $A$ be an affine variety and $U$ a variety. A rational map
  $j\colon A \dasharrow U$ is a \emph{near-inclusion} if there exists
  a Zariski closed subset $Z\subset A$ of codimension $\geq 2$ such
  that $j\colon A\setminus Z\hookrightarrow U$ is an open
  inclusion. The subset $Z$ is called the \emph{irregular set}.
\end{dfn}

\begin{dfn}
  \label{dfn:cluster_variety}
  A \emph{cluster variety} is a variety $U=Y
  \setminus D$ for a triple $(Y, D, j)$ where:
  \begin{enumerate}[(1)]
  \item $Y$ is a projective $\QQ$-factorial normal variety and $D\subset Y$ is a
    reduced integral Weil divisor such that the pair $(Y,D)$ has dlt
    (divisorial log terminal) singularities;
  \item The divisor $K_Y+D$ is Cartier and linearly equivalent to $0$
    --- in other words, $(Y,D)$ is a \emph{Calabi--Yau pair};
  \item $j\colon \TT^\star \dasharrow U=Y\setminus D$ is a
    near-inclusion.
  \end{enumerate}
\end{dfn}

\begin{rem}
  \label{rem:clusters}
  Here are a few comments on the definition.
  \begin{enumerate}[(1)]
  \item Recall that, by
definition, $Y$ is $\QQ$-factorial if all Weil divisors on $Y$ are
$\QQ$-Cartier. This notion is local in the Zariski topology but not in
the \'etale, or the analytic, topology. Hence, it would be
misleading to say things like ``$Y$ has $\QQ$-factorial
singularities.'' 
\item If you are not at home with the various classes of
singularities of pairs that are in use in higher dimensional complex
algebraic geometry, then it is permitted, \emph{en premi\`ere
  lecture}, to replace the condition that the pair $(Y,D)$ have dlt
singularities with the assumption that $Y$ is nonsingular and
$D\subset Y$ is a simple normal crossing (snc) divisor.

I refer to \cite[Sec.~2.3]{MR1658959} for the definition and first
properties of \emph{dlt (divisorial log terminal) and klt (Kawamata
  log terminal) singularities} of pairs, and a discussion of the role
that these singularities play in the minimal model program for
algebraic varieties. It follows from the definition, and the fact that
$K_Y+D$ is Cartier, that $U=Y\setminus D$
has Gorenstein canonical singularities. In particular, in
dimension~$2$ this means that $U$ has \emph{rational double point}
singularities. In many cases of interest, $U$ is in fact nonsingular. 

The point about the class of dlt pairs $(Y,D)$ where $Y$ is
$\QQ$-factorial is that it is well-behaved under volume preserving
birational maps --- see~\cite{MR3504536}.
\item I work with near-inclusions, as opposed to \emph{tout
    court} inclusions, because of the subtleties of birational
  geometry in dimension $\geq 3$. As shown in Proposition~\ref{pro:2D},
  this distinction in unnecessary in dimension $2$. Please don't let
  this detail distract you.
  \end{enumerate}
\end{rem}

\begin{dfn}
  \label{dfn:torus_chart}
  Let $U=Y\setminus D$ be a cluster variety with torus near-inclusion
  $j \colon \TT^\star \dasharrow U$. A \emph{torus chart} on $U$ is
  a near-inclusion $j^\prime \colon \TT^\star
  \dasharrow U$.
  
In particular the given near-inclusion $j\colon \TT^\star \dasharrow
U$ is tautologically a torus chart, which I call the \emph{reference chart}.
\end{dfn}

\begin{rem}
  \label{rem:torus_atlas}
  My definition of cluster variety is more general than the one given
  in~\cite[Sec.~2]{MR3350154}. It is still true, however, that a
  cluster variety in my sense is nearly covered --- see
  Definition~\ref{dfn:nearly_covered} and the short discussion just after
  the following Definition~\ref{dfn:alg_mutation_data} --- by finitely
  many torus charts: this is proved in Theorem~\ref{thm:cluster_atlas}
  of Sec.~\ref{sec:clusters} below.
\end{rem}

\begin{dfn}
  \label{dfn:alg_mutation_data}
  An \emph{(algebraic) mutation data} is a pair $(u,h)$ consisting of
  a primitive vector $u\in M$ and a Laurent polynomial
  $h\in \CC[u^\perp \cap N]$ that is either irreducible or the
  power of an irreducible.

  An algebraic mutation data induces a birational map
\[
\mu =  \mu_{u, h} \colon \TT^\star \dasharrow \TT^\star
\]
called an \emph{(algebraic) mutation} by setting
\[
  \mu^\sharp (x^n)=x^n h^{\langle u,n\rangle}
\]
and extending to a ring homomorphism $\mu^\sharp \colon
\CC[N]\to \CC(N)$ --- where $\CC(N)$ denotes the fraction field of
$\CC[N]$ --- in the obvious way.

If $U=Y\setminus D$ is a cluster variety, 
$j_2, j_1\colon \TT^\star \dasharrow U$ are torus charts and
$j_2=j_1\circ \mu_{u,h}$, I say that $j_2$ is the \emph{mutation} of $j_1$
and write $j_2= \mu_{u,h} (j_1)$.
\end{dfn}

In Theorem~\ref{thm:cluster_atlas} of Sec.~\ref{sec:clusters},
given a cluster variety $U=Y\setminus D$ with reference chart
$j=j_0\colon \TT^\star \dasharrow U$, we will construct additional torus charts
$j_1, \dots, j_r\colon \TT^\star \dasharrow U$ such that the union
\[
\bigcup_{m=0}^r j_m(\TT^\star\setminus Z_m) \subset U 
\]
--- where $Z_m\subset \TT^\star$ is the irregular set --- \emph{nearly
  covers} $U$, in the sense that the complement has codimension
$\geq 2$. In this construction for all $m\geq 1$ the transition map
$j_0^{-1}\circ j_m\colon
\TT^\star \dasharrow \TT^\star$ will be an algebraic mutation in the
sense just defined.

Thus our algebraic mutations correspond to the exchange relations in
cluster algebras, and our Laurent polynomials $h$ to the exchange
binomials in cluster algebras.

\medskip

I next introduce Fano varieties and their specializations.

\begin{dfn}
  \label{dfn:fano_variety}
A \emph{Fano variety} is a normal projective variety $X$
such that the anticanonical divisor $-K_X$ is $\QQ$-Cartier and ample.

A Fano variety of dimension~$2$ is called a \emph{del Pezzo surface}.
\end{dfn}

I aim to discuss the irreducible components of the moduli space of
Fano varieties. This is a coarse enough discussion that I want to have
it without even speaking the words
``moduli space''. Stability, or $K$-polystability, is not relevant:
I am not into the finer question of what exactly are the points of the
moduli space and what Fano varieties they correspond to. My
Definition~\ref{dfn:generic_fano} below captures the Fano varieties that
correspond to the generic points of irreducible components of the
moduli space.

\begin{dfn}
  \label{dfn:family_fano}
  A \emph{family of Fano varieties} is a projective flat morphism
  $f\colon \fX \to T$ where $T$ is reduced and irreducible and:
  \begin{enumerate}[(1)]
  \item every fibre of $f$ is a normal Fano variety;
  \item $f$ is qG (i.e., \emph{$\QQ$-Gorenstein}), that is, there exist
    a line bundle $\cL$ on $\fX$ and a strictly positive integer $r$
    such that for all $t\in T$,
    $\cL_{|\fX_t}=\omega_{\fX_t}^{[r]}$.\footnote{Here
      $\omega_{\fX_t}^{[r]} =\bigl( \omega_{\fX_t}^{\otimes \, r} \bigr)
      ^{\star \star}$ denotes the saturation of $\omega_{\fX_t}^{\otimes \, r}$.} (In particular $\cL$ is
    $f$-ample.)
  \end{enumerate}
\end{dfn}

For more information on the $\QQ$-Gorenstein condition,
see~\cite[Chap.~2]{Kollar}; the key point of the condition is to
ensure that natural numerical invariants are locally constant in $T$,
see~\cite[Sec.~2.5]{Kollar}. The condition is satisfied in many
concrete situations of interest.

The following definition makes precise what I mean by a \emph{generic
  Fano variety}, that is, informally, one that corresponds to the
generic point of an irreducible component of the moduli space of Fano varieties. 

In the definition, a fg field is a field of characteristic~$0$ that is finitely generated
(as a field) over $\QQ$.

\begin{dfn}
  \phantomsection \label{dfn:generic_fano}
  \begin{enumerate}[(1)]
  \item Let $X$ and $Y$ be Fano varieties defined over fg fields $K$,
    $L$. I say that $X$ is a \emph{specialization} of $Y$ --- and
    that $Y$ is a \emph{generization} of $X$ --- if there is a family
    of Fano varieties $f\colon \fX \to T$ such that: There are points
    $t_1, t_2 \in T$ where $t_2$ is a specialization of $t_1$, inclusions
    $k(t_1)\subset L$, $k(t_2)\subset K$, and isomorphisms
    $Y=f^\star (t_1) \otimes L$, $X=f^\star (t_2)\otimes K$.
  \item A Fano variety $X$ defined over a fg field $K$ is
    \emph{generic} if: For all Fano varieties $Y$ defined over a 
    fg field $L$, if $X$ is a specialization of $Y$, then there exist a fg
    field $F$, field extensions $K,L\subset F$, and an isomorphism
    $Y\otimes F = X\otimes F$. 
  \item A Fano variety $X$ over $\CC$ is generic if it can be defined
    over a fg field $K\subset \CC$, and it is generic as a Fano
    variety over $K$.
  \end{enumerate}
\end{dfn}

\begin{dfn}
  \label{dfn:toric_specialisation} Let $Y$ be a Fano variety. A \emph{toric
  specialization} of $Y$ is a toric Fano variety $X$ that is a
  specialization of $Y$. 
\end{dfn}

I want to understand all toric specializations of a given generic Fano
variety $X$ with klt singularities, up to isomorphism. I require that
$X$ has klt singularities \cite[Sec.~2.3]{MR1658959} because a Fano
variety with klt singularities in many ways still behaves like a Fano
variety is expected to do.

\smallskip

In the following discussion all toric varieties are toric varieties for the torus
with character group $M$:
\[
\TT = \Spec \CC[M]
\]

\begin{dfn}
  \label{dfn:fano_polytope}
  A convex lattice polytope\footnote{I abuse notation
    and write $P\subset N$ for a lattice polytope to mean that $P\subset N\otimes \RR$ and it has
    its vertices in $N$.} $P\subset N$ is \emph{Fano} if it has the origin in its
  strict interior (in particular $P$ is full dimensional) and all of
  its vertices are primitive lattice elements.

  A \emph{Fano polygon} is a Fano polytope of dimension~$2$.
\end{dfn}

If $P$ is a Fano polytope, I denote by $X_P$ the toric variety with
fan the \emph{face fan} (aka \emph{spanning fan}) of $P$: $X_P$ is a toric Fano
variety and $P$ is its \emph{anticanonical polytope}.\footnote{The
  polar polytope $Q=P^\star \subset M$ is the \emph{moment polytope} of
  $X_P$ anticanonically polarized.} It is obvious
that, conversely, if $X$ is a toric Fano variety with anticanonical polytope
$P$, then $X=X_P$.

\begin{dfn}
  \label{dfn:combinatorial_mutation}
  A \emph{(combinatorial) mutation data} is a pair $(u, H)$ consisting
  of a primitive vector $u\in M$ and lattice polytope $H\subset
  u^\perp \cap N$.

  A combinatorial mutation data induces an
operation~\cite[Definition~5]{AkhCoaGal12} on the set of Fano
polytopes in $N$ called a \emph{combinatorial mutation}. When
defined,\footnote{A combinatorial condition must be satisfied for
  the mutation to be defined. Informally, for all $k>0$, $kH$ needs to
be a Minkowski factor of the level set $P\cap u^{-1}(-k)$,
see~\cite{AkhCoaGal12} for a complete discussion.}
the mutation of $P$ is denoted by $\mu_{u, H} (P)$.
\end{dfn}

\begin{rem}
  \label{rem:combinatorial_mutations} The definition of combinatorial
  mutation is subtle and not super-easy to digest. Two properties help
  to understand it:
  \begin{enumerate}[(1)]
  \item Let $(u, h)$ be an algebraic mutation data, and $f\in \CC[N]$
    such that $\mu_{(u,h)}^\sharp (f)\in \CC[N]$ --- when this happens I
    say that $f$ is \emph{mutable}. Consider the combinatorial
    mutation data $(u, H)$ where $H=\Newt h$ (the \emph{Newton polytope} of $h$), and let $P=\Newt
    f$. Then $\mu_{(u, H)}(P)$ is defined, and it equals $\Newt
    \bigr(\mu_{(u,h)}^\sharp (f)\bigl)$.
  \item The mutation of the polar polytope $P^\star\subset M$ is
    induced by an integral piece-wise linear transformation of $M$. 
  \end{enumerate}
\end{rem}

\begin{thm} \cite{MR2958983, MR4296370} 
  \label{thm:Ilten-Petracci}
  Let $P_2=\mu_{u, H} (P_1)$ be a combinatorial mutation of Fano lattice
  polytopes; denote by $X_{P_1}$, $X_{P_2}$ the corresponding toric Fano
  varieties. There exists a family of Fano varieties $f\colon \fX
  \to \PP^1$ such that 
  \[
X_{P_1} = f^\star (0) \quad \text{and} \quad X_{P_2}=f^\star (\infty)
  \]  
\end{thm}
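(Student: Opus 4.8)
The plan is to construct the family $\fX\to\PP^1$ explicitly as a subvariety of a toric ambient space, using the fact (Remark~\ref{rem:combinatorial_mutations}(2)) that the combinatorial mutation on the dual side $P^\star\subset M$ is governed by a piecewise-linear transformation. The cleanest route I would take is the one via \emph{Cayley cones} / the ``deformation by Cayley sums'' picture that appears in Ilten's work on mutations and deformations of toric varieties. Concretely: since $P_2 = \mu_{u,H}(P_1)$, the level sets $P_1\cap u^{-1}(k)$ and $P_2\cap u^{-1}(k)$ differ by Minkowski addition/subtraction of multiples of $H$; one builds a polytope $\widetilde P\subset N\oplus\ZZ$ (a ``Cayley-type'' polytope whose slices interpolate between $P_1$ and $P_2$), takes the associated projective toric variety $\widetilde X = X_{\widetilde P}$ with its natural map to $\PP^1 = X_{[0,1]}$ coming from the projection $N\oplus\ZZ\to\ZZ$, and then takes $\fX$ to be an appropriate complete intersection (or the closure of a torus orbit under a modified torus action) inside $\widetilde X$ whose two special fibres over $0,\infty\in\PP^1$ are exactly $X_{P_1}$ and $X_{P_2}$. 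The mutation being \emph{defined} guarantees that the required Minkowski factorisations exist, which is precisely what is needed for the interpolating polytope $\widetilde P$ to have the correct slices.

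The steps, in order, are: (i) translate the hypothesis $P_2=\mu_{u,H}(P_1)$ into the statement that for each $k$ the slice $P_i\cap u^{-1}(-k)$ factors as $R_k + (\text{multiple of }H)$ for a common ``remainder'' $R_k$, and assemble the data into the interpolating polytope $\widetilde P$; (ii) build $\widetilde X=X_{\widetilde P}$ and the toric morphism $\pi\colon\widetilde X\to\PP^1$; (iii) identify $\fX\subset\widetilde X$ cut out so that the generic fibre is Fano and the fibres over $0$ and $\infty$ are $X_{P_1}$, $X_{P_2}$ — here one needs to check that $\fX$ is irreducible, that $\pi|_\fX$ is flat (equidimensionality of fibres suffices, since the base is a smooth curve and $\fX$ is Cohen–Macaulay, as it is toric or a complete intersection therein), and that every fibre is a \emph{normal} Fano variety; (iv) verify the $\QQ$-Gorenstein condition of Definition~\ref{dfn:family_fano}(2), i.e. produce the line bundle $\cL$ restricting to $\omega^{[r]}$ on each fibre — for a toric-type family this is a matter of exhibiting the relevant torus-invariant divisor and using that the Fano polytope data makes $-K$ behave well in the family.

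The main obstacle, I expect, will be step (iii)–(iv): ensuring that \emph{every} fibre of $\pi|_\fX$ — not just the two special ones and the generic one — is normal and Fano with the $\QQ$-Gorenstein property holding uniformly. Combinatorial mutation can pass through intermediate polytopes that are not Fano (e.g. the origin may move to the boundary of a slice), so one cannot naively take a family of $X_P$'s; the Cayley/complete-intersection construction is designed to sidestep this, but checking normality of all fibres and constancy of the Hilbert polynomial with respect to $\cL$ requires care. A secondary point is that the two references cited, \cite{MR2958983} (Ilten) and \cite{MR4296370} (Petracci), prove closely related statements under slightly different hypotheses, so part of the work is simply assembling their results into the exact form stated: Ilten's deformation-theoretic construction gives the family over $\AA^1$ with central fibre $X_{P_1}$ and general fibre a partial smoothing whose further degeneration is $X_{P_2}$, and one compactifies and relabels to land on $\PP^1$ with the two toric fibres at $0$ and $\infty$. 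I would present the argument by first doing the case where $H$ is a single lattice segment (the ``elementary'' mutations, which generate all mutations up to composition and $\GL(N)$), reduce the general case to iterated elementary mutations plus composing families over $\PP^1$, and cite \cite{MR2958983, MR4296370} for the deformation-theoretic heart of the elementary case.
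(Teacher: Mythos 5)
This theorem is not proved in the paper: it is imported verbatim from \cite{MR2958983, MR4296370}, so the only thing to compare your argument against is those references. Your sketch has the right overall shape --- a polyhedral total space fibred over $\PP^1$, with flatness, normality of every fibre and the qG condition of Definition~\ref{dfn:family_fano} correctly identified as the delicate points --- but two of its load-bearing steps do not hold up. The proposed reduction to ``elementary'' mutations with $H$ a lattice segment fails in dimension $\geq 3$: a mutation $\mu_{u,H}$ factors into segment mutations only when $H$ is a Minkowski sum of lattice segments, and the factors that actually occur (e.g.\ the triangle factors in Minkowski-polynomial mutations of three-dimensional Fano polytopes) are not of this form; moreover, even when $H$ does decompose, the intermediate polytopes need not be Fano, a problem you yourself flag. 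So the general case cannot be assembled from the segment case by composing families.

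The second gap is that the construction is left vague exactly where the mathematical content lies. A lattice polytope $\widetilde P\subset N\oplus\ZZ$ whose height-$0$ and height-$1$ slices are $P_1$ and $P_2$ does not yield a toric morphism $X_{\widetilde P}\to\PP^1$ with fibres $X_{P_1}$ and $X_{P_2}$: fibres of toric morphisms over torus-fixed points of the base are typically non-reduced or reducible, and the intermediate slices of such a $\widetilde P$ interpolate Minkowski-linearly, which is not what a mutation does (a mutation adds multiples of $H$ to the level sets $P\cap u^{-1}(k)$ for $k$ of one sign and Minkowski-subtracts them for the other sign, the grading being by $u$, not by the auxiliary $\ZZ$ factor). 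The cited proofs proceed differently: Ilten encodes the mutation as a divisorial polytope on $\PP^1$ and takes the associated complexity-one $T$-variety, computing its fibres over $0$ and $\infty$ to be $X_{P_1}$ and $X_{P_2}$, while Petracci constructs the deformation directly via homogeneous deformations of toric pairs, which is also what secures the $\QQ$-Gorenstein condition in the precise form required here. Until your ``appropriate complete intersection inside $\widetilde X$'' is actually specified and its fibres computed, the proposal is a plan for locating the proof in the literature rather than a proof.
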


\begin{dfn}
  \label{dfn:Ilten_pencil}
  The family of Fano varieties of the theorem above is called an
  \emph{Ilten pencil}.
\end{dfn}

I next define the concept of a polarization on a cluster variety. The notion is
closely related to that of a polytope in a \emph{tropical mutation scheme}
in forthcoming work by Laura Escobar, Megumi Harada and Chris
Manon.

\begin{dfn}
  \label{dfn:polarisation}
  A \emph{polarization} of a cluster variety $U=Y\setminus D$ is a
  set-theoretic function:
  \[
    p \colon \bigl\{j\colon \TT^\star \dasharrow U \mid \text{$j$ is
      a torus chart}\bigr\}
    \to
    \bigl\{ P\subset N \mid \text{$P$ is a Fano polytope} \bigr\}
  \]
  that is mutation-preserving in the sense that, for all torus charts
  $j_1,j_2 \colon \TT^\star \dasharrow U$:
  \[
\text{If
  $j_2=\mu_{u, h} (j_1)$, then $p(j_2)=\mu_{u, \Newt h} \bigl( p(j_1) \bigr)$}
\]
 where I denote by $\Newt h$ the Newton polytope of $h$. 
 A \emph{polarized cluster variety} is a pair of a cluster variety and
 a polarization. 
\end{dfn}

\begin{rem}
  \label{rem:polarisations}
Polarizations arise from the following
construction. Let $U=Y\setminus D$ be a cluster variety. Let $H$ be an
effective divisor on $Y$, such that
\begin{enumerate}
\item $H$ meets $D$ properly, and
\item $H\sim B$, where $B\geq 0$ is supported on $D$. 
\end{enumerate}
Let $f\in H^0(U,\oo_U)$ be a global function such that $\divi_Y f =
H-B$. These data induce a polarization by setting
\[
  p(j) = \Newt (f\circ j)
\]
Thus, a polarization is the same as a mobile boundary divisor on $X$.
\end{rem}

I am now ready to state our main conjecture. In
Sec.~\ref{sec:surfaces} I will prove the assertion in dimension $n=2$
and show that it is equivalent to \cite[Conjecture~A]{MR3430830}. 

\begin{con}
  \label{conjecture_main}
  Let $X$ be a generic Fano variety with klt singularities. There is a
  polarized cluster variety $U=Y\setminus D$ with polarization $p$,
  such that the set of toric Fano varieties
  \[
\bigl\{X_{p(j)} \mid \text{$j\colon \TT^\star \dasharrow U$ is a torus
  chart} \bigr\}
\]
 up to isomorphism is precisely the set of toric specializations of
 $X$ up to isomorphism. 
\end{con}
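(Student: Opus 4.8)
The plan is to prove the statement in dimension $n=2$; the higher-dimensional case remains open and is surveyed in Section~\ref{sec:evidence}. In dimension $2$ I will deduce Conjecture~\ref{conjecture_main} from two essentially independent inputs, and along the way show it is equivalent to \cite[Conjecture~A]{MR3430830}. The first input is deformation-theoretic and combinatorial: the toric specializations of a generic del Pezzo surface $X$ with klt singularities, recorded as anticanonical Fano polygons $P\subset N$ (Definition~\ref{dfn:fano_polytope}), form a single equivalence class under combinatorial mutation (Definition~\ref{dfn:combinatorial_mutation}). The second input is cluster-theoretic: given a combinatorial mutation class $[P_0]$ of Fano polygons, one constructs a polarized cluster variety $(U,p)$ whose reference chart has polarization $P_0$, whose torus charts realize up to isomorphism exactly the polygons in $[P_0]$, and which depends only on $[P_0]$. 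Granting both, $\{X_{p(j)}\}=\{X_Q\mid Q\in[P_0]\}$ is precisely the set of toric specializations of $X$, which is the conjecture.

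First I would assemble the combinatorial input, following Kasprzyk--Nill--Prince, Lutz and Hacking. That mutation-equivalent Fano polygons give the \emph{same} del Pezzo surface is the easy direction: a combinatorial mutation $P_1\rightsquigarrow P_2$ yields, by Theorem~\ref{thm:Ilten-Petracci}, an Ilten pencil $\fX\to\PP^1$ with $X_{P_1}=\fX_0$ and $X_{P_2}=\fX_\infty$, whose generic fibre is a del Pezzo surface degenerating qG to both toric ends; chaining pencils along the mutation graph of $[P_0]$ and invoking genericity of $X$ shows every $X_Q$ with $Q\in[P_0]$ is a toric specialization of one and the same $X$. The converse --- that there is no ``exotic'' toric specialization of $X$ lying outside $[P_0]$ --- is the substantial point, and here I would invoke the analysis of Lai--Zhou of the qG-deformation theory of a toric del Pezzo with $T$-singularities: the locus in the versal qG-deformation along which the surface stays Fano, together with the stratification by special toric fibres, is controlled by Minkowski decompositions of the anticanonical cycle, which is exactly the bookkeeping of combinatorial mutation; this forces any two toric specializations of $X$ to be mutation-equivalent.

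Next I would carry out the cluster-theoretic construction. Starting from $P_0$, pass to the toric del Pezzo $X_{P_0}$ with its toric boundary --- an anticanonical cycle of rational curves --- and take a suitable $\QQ$-factorial dlt modification to produce a Calabi--Yau pair $(Y_0,D_0)$ in the sense of Definition~\ref{dfn:cluster_variety}; set $U=Y_0\setminus D_0$ with reference near-inclusion the big torus orbit, so $U$ is a cluster variety, and recall that by Proposition~\ref{pro:2D} the near-inclusion subtleties disappear in dimension $2$. The polarization $p$ comes from Remark~\ref{rem:polarisations} applied to the divisor on $Y_0$ cut out by the mirror Landau--Ginzburg potential $W$ of $X$, normalized so that $\Newt(W)$ in the reference chart is $P_0$; mutation-preservation of $p$ is then precisely the compatibility of algebraic with combinatorial mutation in Remark~\ref{rem:combinatorial_mutations}(1). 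It remains to match torus charts with polygons: by Theorem~\ref{thm:cluster_atlas} and the theory of Looijenga pairs, the torus charts on $U$ are, up to $\Aut U$ and trivial mutations, the toric models of $(Y_0,D_0)$; these are related to one another by algebraic mutations, and each records a Fano polygon. That the resulting set of polygons is exactly $[P_0]$, and that $U$ is independent of the representative $P_0\in[P_0]$, follows because the toric models of a fixed $(Y_0,D_0)$ are permuted transitively by the mutation moves coming from the $(-1)$- and $(-2)$-curve combinatorics of $D_0$.

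I expect the main obstacle to be the converse half of the first input: ruling out toric specializations of $X$ outside the class $[P_0]$. This is a statement about the global geometry of the qG-deformation space --- one must know that its Fano locus is connected and that its stratification by special toric fibres is organized entirely by Minkowski/mutation data --- and this is where the work of Lai--Zhou, building on Hacking and on Kasprzyk--Nill--Prince, does the heavy lifting. A secondary technical point is the precise dictionary between torus charts on the abstract cluster variety $U$ and toric models of the pair $(Y_0,D_0)$, including the passage through $\QQ$-factorial dlt modifications and the identification up to isomorphism of charts falling outside the image of the mutation graph; this is routine in dimension $2$ but needs care.
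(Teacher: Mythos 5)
Your overall architecture agrees with the paper's: reduce Conjecture~\ref{conjecture_main} in dimension~$2$ to the statement that all toric specializations of a generic $X$ form a single combinatorial mutation class (this is the equivalence with \cite[Conjecture~A]{MR3430830}, Lemma~\ref{lem:connectA}), and then realize that class as the polarizations of the torus charts on one polarized cluster surface. But the step you yourself flag as the main obstacle --- ruling out toric specializations outside $[P_0]$ --- is where your argument is not a proof. You assert that the versal qG-deformation of a toric del Pezzo, its Fano locus, and its stratification by toric fibres are ``controlled by Minkowski decompositions of the anticanonical cycle,'' and attribute this to Lai--Zhou. That is not what \cite{https://doi.org/10.48550/arxiv.2201.12703} proves, and no such direct description of the global qG-deformation space is available; indeed Remark~\ref{rem:argh} of the paper laments precisely the absence of such a direct deformation-theoretic argument. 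The actual mechanism is indirect: from $P$ one builds the \emph{mirror} surface $(Y_P,D_P)$ by blowing up non-toric points $\sum k_ix_i$ on the boundary of the toric surface $\overline{Y}_P$ for the \emph{dual} torus with moment polygon $P$; Lemma~\ref{lem:GHKfamily} (via the Gross--Hacking--Keel and Lai--Zhou mirror family over $\Spec\CC[\NE(Y_P)]$, which requires $P$ not of class $T$ for positivity) shows $X_P$ and $Y_P$ are qG-deformation equivalent; hence $Y_{P_1}$ and $Y_{P_2}$ are qG-deformation equivalent, and since $H^2(U_{P_i};\ZZ)$ is split Tate the Torelli theorem for Looijenga pairs \cite{MR3314827} forces $Y_{P_1}\cong Y_{P_2}$; only then does connectedness of the mutation graph (Proposition~\ref{pro:connectedness}, proved via the volume-preserving Sarkisov program, not via $(-1)$/$(-2)$-curve combinatorics alone) yield a mutation chain from $P_1$ to $P_2$. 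The Torelli step --- the device that actually puts the two polygons on the same cluster surface --- is entirely absent from your proposal, as is the necessary case split: class-$T$ polygons (the smoothable case) are handled separately by the purely combinatorial classification of \cite{MR3686766} and \cite{https://doi.org/10.48550/arxiv.2112.08246}, since Lemma~\ref{lem:GHKfamily} does not apply to them.

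A secondary but genuine error is your construction of the cluster variety. A $\QQ$-factorial dlt modification of the toric pair $(X_{P_0},\partial X_{P_0})$ is again toric, so your $U=Y_0\setminus D_0$ would be the torus itself, with empty seed and no nontrivial torus charts; moreover it lives on the wrong side of the mirror (it is a variety for $\TT$, whereas the charts in Conjecture~\ref{conjecture_main} are maps from $\TT^\star$). The correct object is obtained by \emph{non-toric} blowups on the boundary of $\overline{Y}_{P_0}$, the toric surface for $\TT^\star$ with moment polygon $P_0$, with multiplicities $k_i$ read off from the edges of $P_0$; the polarization is $p(j')=\Newt(\widetilde{f}\circ j')$ for a generic maximally mutable Laurent polynomial $f$, as in Remark~\ref{rem:polarisations}, and mutation-preservation is Remark~\ref{rem:combinatorial_mutations}(1) --- that part of your sketch is right once the underlying surface is fixed.
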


\begin{exa}
  The first nontrivial example of Conjecture~\ref{conjecture_main}
  concerns toric specializations of $\PP^2$. It is
  well-known~\cite{MR1116920,MR2581246} that these are the weighted
  projective planes $\PP(a^2, b^2, c^2)$ where $(a,b,c)$ is a triple
  of integers that solve the \emph{Markov equation}
  \[
a^2+b^2+c^2 -3abc =0
\]
but even in this case the explicit correspondence with the set of
torus charts on a polarized cluster surface follows from results of
Kasprzyk--Nill--Prince~\cite{MR3686766}, see also Lutz~\cite{https://doi.org/10.48550/arxiv.2112.08246}
and Sec.~\ref{sec:surfaces} below.
\end{exa}

\begin{rem}
  \phantomsection \label{rem:zerostratum}
  \begin{enumerate}[(1)]
  \item In dimension $>2$, I don't know an example of a cluster variety
    where an explicit description of the set of \emph{all} torus
    charts is known. The difficulty has to do with the far greater
    subtlety of birational geometry in dimension~$\geq 3$.
  \item If $U=Y\setminus D$ is a cluster variety, then it is often possible to
  construct finitely many toric degenerations of $Y$ corresponding to points
  $x\in D$ of the \emph{zero stratum of $D$}. These
  toric degenerations are constructed from the Cox rings made
  from the irreducible components $D_i\subset D$ such that $x\in D_i$.
  
  This type of construction is often used in the representation theory
  and combinatorial geometry community to produce toric degenerations,
  but it is \emph{not} the mechanism at work behind
  Conjecture~\ref{conjecture_main}. Rather, the mechanism there is the concept of
  polarization of a cluster variety.
  \end{enumerate}
\end{rem}

\subsection{Contents of the rest of the paper}
\label{sec:contents}

The remaining sections of this note depend logically on the
Introduction but they do not depend on each other and
thus they can be read in any order. 

In Sec.~\ref{sec:clusters} I develop some consequences of the
definition and provide the first steps of a theory of cluster
varieties. This material is not very difficult, and not especially
original --- for example, it was known independently by (at
least) Sean Keel --- but it is not, I think, in print.

Sec.~\ref{sec:surfaces} sketches a proof of Conjecture~\ref{conjecture_main} in
dimension~$2$. I also show that, in dimension~$2$, the conjecture is
equivalent to~\cite[Conjecture~A]{MR3430830}. That conjecture was one
of two cornerstones of our new point of view on the classification of
log del Pezzo surfaces inspired by mirror symmetry --- the other
being~\cite[Conjecture~B]{MR3430830}. Thus the results of this paper
complete a substantial part of that program, revealing a
surprising structure to the classification of log del Pezzo surfaces.

In Sec.~\ref{sec:evidence} I collect some of the evidence. My
motivation for stating the conjecture comes from the Fanosearch
program, and the most striking evidence --- consisting of large
numerical experiments with $\QQ$-Fano
\mbox{3-folds}~\cite{Coates_2021,
  https://doi.org/10.48550/arxiv.2210.07328} --- is presented in
Sec.~\ref{sec:evid-from-fanos}. In Sec.~\ref{sec:example-from-cluster}
I briefly touch on toric specializations of Grassmannians and their
relation with seeds in certain cluster algebras that have been
discovered in the representation theory community. In
Sec.~\ref{sec:conformal-blocks}, I sketch work by
Manon~\cite{MR2928457} and Belmans--Galkin--Mukhopadhyay
\cite{https://doi.org/10.48550/arxiv.2206.11584} on toric
specializations of the Fano varieties $M_C(2,\text{odd})$, the moduli space
of rank two vector bundles on a curve $C$ with determinant a line
bundle of odd degree on $C$. This example is psychologically
important because the moduli space of these Fanos in genus $g$ is birational to
the moduli space of curves of genus $g$ --- in particular, for large $g$, it
is far from being unirational.

\subsection*{Acknowledgements}

It was a privilege and a great honor to be invited to submit a paper
to this volume on the occasion of the 70\textsuperscript{th}
anniversary of V.\ V.\ Shokurov.

It will be clear to all those familiar with the issues that this note
owes a very significant intellectual debt to the Gross--Siebert
program in general, and in particular work by
Gross--Pandharipande--Siebert~\cite{MR2667135}, Gross--Hacking--Keel
and Lai--Zhou~\cite{MR3415066,
  https://doi.org/10.48550/arxiv.2201.12703}, and
Gross--Hacking--Keel--Kontsevich~\cite{MR3758151}.  

My ideas here have been developed in the Fanosearch group and in
particular in discussions with Tom Coates, Liana Heuberger, Al
Kasprzyk and Andrea Petracci. My treatment of cluster varieties in
Sec.~\ref{sec:clusters} benefits greatly from conversations with Sean
Keel. I thank Paul Hacking for allowing me to include in
Sec.~\ref{sec:surfaces} a sketch of his proof of
Conjecture~\ref{conjecture_main} in the case of surfaces, personally
communicated at the \emph{Retrospective Workshop on Calabi--Yau
  Varieties: Arithmetic, Geometry and Physics}, Herstmonceux Castle,
20--25 June 2016. (The proof depends crucially on the results of the
recent paper~\cite{https://doi.org/10.48550/arxiv.2201.12703}, which
explains why it took so long to write it up.)

This paper was finalized during my stay at the \emph{Banff
  International Research Station} to attend the Workshop on
\emph{Toric Degenerations}, 4--9 December 2022. There I met Laura
Escobar, Megumi Harada and Chris Manon and learned about their
forthcoming work. I thank Alfredo N\'ajera Ch\'avez and Lara Bossinger
for discussions about cluster structures on, and toric degenerations
of, Grassmannians and more general flag varieties, and Chris Manon for
explaining his work on moduli spaces of conformal blocks, during and
after the Workshop. I am very grateful to the participants, and the
staff at BIRS, for creating an atmosphere where ideas were exchanged
between diverse mathematical communities.

I thank Lara Bossinger, Tom Bridgeland, Tom Coates, Laura
Escobar, Megumi Harada, Liana Heuberger, Sean Keel, Jonathan Lai,
Wendelin Lutz, Chris Manon, Helge Ruddat, Jenia Tevelev and Yan Zhou for helpful
feedback on earlier versions.

Finally I thank the anonymous referee for some very helpful remarks
and corrections on the submitted version.

\section{Cluster varieties: a short introduction}
\label{sec:clusters}

In this section, I develop some consequences of the definition and
provide the first steps of a theory of cluster varieties. I start off
by explaining that a cluster variety is always nearly covered by a
union of torus charts. Then I briefly speak of the mutation graph of a
cluster variety: this is the graph with vertices the torus charts and
where two vertices are connected by an edge if they differ by a
mutation.

\subsection{A near-atlas of torus charts}
\label{sec:construction-basics}

\begin{pro}
  \label{pro:2D}
  If $U=Y\setminus D$ is a $2$-dimensional cluster variety, and
  $j\colon \TT^\star \dasharrow U$ is a torus chart, then in fact
  $j$ is defined everywhere and it is an open embedding of $\TT^\star$ in $U$.
\end{pro}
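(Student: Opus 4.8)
The plan is to reduce the statement to the single claim that $j$, viewed as a rational map $\TT^\star \dasharrow Y$, is a morphism. First I would note that this suffices. If $j\colon\TT^\star\to Y$ is a morphism then, being injective on the dense open $\TT^\star\setminus Z$ --- where $Z$ is the irregular set, a \emph{finite} set of points since $\codim_{\TT^\star}Z\ge 2$ and $\dim\TT^\star=2$ --- it contracts no curve, hence is quasi-finite; being also birational with $Y$ normal, Zariski's main theorem makes it an open immersion $\TT^\star\hookrightarrow Y$. Its image then lies in $U$: otherwise the open set $j(\TT^\star)$ would meet some component $D_i$ of $D$ in a dense open of $D_i$, whose preimage would be a curve in $\TT^\star$ dominating $D_i$, which is impossible because the general point of $\TT^\star$ lies in $\TT^\star\setminus Z$ and so maps into $U$. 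So $j\colon\TT^\star\hookrightarrow U$ is an open embedding. (It is here that $\dim=2$ enters essentially, forcing $Z$ to be $0$-dimensional; in dimension $\ge 3$ the proposition genuinely fails, which is why near-inclusions appear in the definition of cluster variety.)

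To prove the claim I would argue by contradiction: suppose $j$ has a point of indeterminacy, necessarily lying in $Z$, and resolve it, obtaining a composite of blow-ups of points $\sigma\colon W\to\TT^\star$, an isomorphism over $\TT^\star\setminus Z$, with $g:=j\circ\sigma\colon W\to Y$ a birational morphism. Then I would bring in the Calabi--Yau structure. Fix the translation-invariant $2$-form $\omega^\star$ on $\TT^\star$, so $\divi_{\TT^\star}\omega^\star=0$, and --- using $K_Y+D\sim 0$ with $D$ reduced --- a rational $2$-form $\omega_Y$ on $Y$ with $\divi_Y\omega_Y=-D$, so that $\omega_Y$ is regular and nowhere zero on $U=Y\setminus D$. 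Over $\TT^\star\setminus Z$ the morphism $j$ lands in $U$ (indeed in its smooth locus, being an open immersion there), so $j^\ast\omega_Y$, a priori a rational $2$-form on $\TT^\star$, has no zeros or poles outside the finite set $Z$, hence $\divi_{\TT^\star}(j^\ast\omega_Y)=0$ (the divisor of a rational form is a codimension-one cycle). Writing $E_1,\dots,E_s$ for the $\sigma$-exceptional primes, with discrepancies $a_i=a(E_i;\TT^\star)\ge 1$ --- positive because $\TT^\star$ is a smooth surface and $\sigma$ a composite of point blow-ups --- pulling back by $\sigma$ gives
\[
\divi_W\!\big(g^\ast\omega_Y\big)=K_{W/\TT^\star}=\sum_{i=1}^s a_i\,E_i,
\]
so every $\sigma$-exceptional divisor occurs with coefficient $\ge 1$.

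The hard part is the next step: showing $g$ must contract every $E_i$. Here one compares the display above with the computation on the $Y$-side,
\[
\divi_W\!\big(g^\ast\omega_Y\big)=-\,g^\ast D+K_{W/Y},
\]
valid because $g$ is a birational morphism to the normal, $\QQ$-factorial variety $Y$, with $K_{W/Y}$ supported on the $g$-exceptional locus. If some $E_i$ were not contracted it would map onto a prime divisor $C\subset Y$; since the image of the exceptional locus of a birational morphism to a normal variety has codimension $\ge 2$, we get $C\not\subset\Sing Y$ and $g$ is a local isomorphism at the generic point of $E_i$, where $K_{W/Y}$ has order $0$ and $g^\ast D$ has order equal to the coefficient of $C$ in the reduced divisor $D$, i.e.\ $0$ or $1$; so the coefficient of $E_i$ on the right lies in $\{0,-1\}$, contradicting $a_i\ge 1$. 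Hence $g$ contracts all of $\Exc(\sigma)$. To finish, for each $q\in Z$ the fibre $\sigma^{-1}(q)$ is connected and each component is contracted, so $g$ is constant on it, hence on every fibre of $\sigma$; by the rigidity lemma (using $\sigma_\ast\oo_W=\oo_{\TT^\star}$, as $\sigma$ is proper birational and $\TT^\star$ normal) $g$ descends to a morphism $h\colon\TT^\star\to Y$ with $g=h\circ\sigma$, and $h$ agrees with $j$ on $\TT^\star\setminus Z$, so $j=h$ is a morphism --- the desired contradiction. The main obstacle is thus precisely this contraction claim, which is where the Calabi--Yau hypothesis $K_Y+D\sim 0$ (together with the positivity of the discrepancies $a_i$ of point blow-ups of the smooth surface $\TT^\star$) does the real work.
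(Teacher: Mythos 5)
Your proof is correct, and it reaches the conclusion by a genuinely different (and more self-contained) route than the paper. The paper works with the normalization $G$ of the graph of $j$ and shows the projection $p\colon G\to\TT^\star$ contracts no curve: a contracted curve would give a divisor $E\subset U\setminus j(\TT^\star\setminus Z)$ with $a(E,\TT^\star)>0$, contradicting the Claim in the proof of Lemma~\ref{lem:torus_embeddings} --- namely $a(E,\TT^\star)=a(E,Y,D)=0$ for all such divisors --- which is itself established in arbitrary dimension via a common log resolution, the subboundary property of dlt Calabi--Yau pairs, and the negativity lemma. You instead resolve the indeterminacy of $j$ by point blow-ups $\sigma\colon W\to\TT^\star$ and compute $\divi_W(g^\ast\omega_Y)$ in two ways, using the finiteness of $Z$ to get $\divi_{\TT^\star}(j^\ast\omega_Y)=0$ on the nose, so that every $\sigma$-exceptional curve carries coefficient $a_i\ge 1$ while a non-$g$-contracted one would carry coefficient $-\mult_C D\le 0$; rigidity then descends $g$ to a morphism, and Zariski's main theorem upgrades it to an open immersion landing in $U$. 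The underlying mechanism --- positive discrepancy over the smooth torus clashing with the crepant/Calabi--Yau structure --- is the same in both arguments, but your explicit volume-form computation replaces the forward reference to the negativity-lemma Claim with an elementary calculation that is special to surfaces (it is exactly the finiteness of $Z$ that kills $\divi_{\TT^\star}(j^\ast\omega_Y)$), whereas the paper's route proves the stronger dimension-free statement it needs anyway for Theorem~\ref{thm:cluster_atlas}. All the individual steps you use (existence of $\omega_Y$ with $\divi_Y\omega_Y=-D$ from $K_Y+D\sim 0$, the codimension-$\ge 2$ image of $\Exc(g)$ for $Y$ normal, the factorization through $\sigma$) check out.
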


\begin{proof}
  Let $G\to \TT^\star \times U$ be the normalization of the graph and
  denote by $p\colon G \to \TT^\star$, $q\colon G \to U$ the obvious
  morphisms. I need to show that $p\colon G\to \TT^\star$ is
  finite-to-one. Assume for a contradiction that $E\subset G$ is a
  curve such that $p(E)=t\in \TT^\star$ is a point. Then
  $q(E)\subset U$ is a divisor, necessarily with discrepancy $a(E, \TT^\star)>0$
  --- see Definition~\ref{dfn:discrepancy} --- and
  this contradicts the Claim in the proof of
  Lemma~\ref{lem:torus_embeddings} below, stating that all these
  discrepancies must be $0$.
\end{proof}

The difference in dimension $\geq 3$ is that there can be divisors
$E\subset G$ that are both $p$- and $q$-exceptional. Simple examples
show that in dimension $\geq 3$ there are torus charts
$j\colon \TT^\star \dasharrow U$ where $j$ is not defined everywhere.

\begin{dfn}
  \label{dfn:nearly_covered}
  Let $X$ be a variety. A Zariski closed subset $Z\subset X$ is
  \emph{small} if every component of $Z$ has codimension $\geq 2$ in
  $X$.

  A Zariski open subset $U\subset X$ is \emph{large} if the
  complement $Z=X\setminus U$ is small.

  I say that $X$ is \emph{nearly covered} by $\{U_m\mid m=0, \dots,
  r\}$ if $\cup_{m=0}^r \,U_m \subset X$ is large.
\end{dfn}

\begin{thm}
  \label{thm:cluster_atlas}
  Let $U=Y\setminus D$ be a cluster
  variety with reference torus chart $j\colon \TT^\star \dasharrow
  U$. There exists a finite set of torus charts:
  \[
    j_0=j, j_1, \dots, j_r\colon \TT^\star \dasharrow U
  \]
  such that, denoting by $Z_m\subset \TT^\star$ the irregular set of $j_m$:
  \begin{enumerate}[(1)]
  \item the union $\cup_{m=0}^r \, j_m(\TT^\star \setminus Z_m) \subset U$ is
    large --- that is, $U$ is nearly covered by
    \[
      \{j_m(\TT^\star\setminus Z_m) \mid m=0,\dots, r\}
    \]
  \item for all $m=1, \dots, r$, $j_{0}^{-1} \circ j_m$ is a mutation.  
  \end{enumerate}
\end{thm}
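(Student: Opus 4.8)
The plan is to produce the charts $j_1,\dots,j_r$ by resolving the indeterminacy of the reference chart and running a relative minimal model program. First I would take a log resolution $\pi\colon \widetilde Y\to Y$ of the pair $(Y,D)$ together with the rational map $j^{-1}\colon Y\dasharrow \TT^\star\subset \overline{\TT^\star}$, where $\overline{\TT^\star}$ is a fixed projective toric compactification whose boundary I call $\Delta$. Pulling back, on $\widetilde Y$ we get a morphism $g\colon \widetilde Y\to \overline{\TT^\star}$ and a snc divisor $\widetilde D = \pi^{-1}_*D + \Exc(\pi)$; by condition~(2) in Definition~\ref{dfn:cluster_variety} and the crepant-pullback formula, $(\widetilde Y,\widetilde D')$ is log canonical with $K_{\widetilde Y}+\widetilde D'\equiv 0$ for the appropriate boundary $\widetilde D'$ obtained by including discrepancy-$0$ exceptionals with coefficient $1$ and subtracting the positive-discrepancy ones. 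The key point, which mirrors the Claim cited in the proof of Proposition~\ref{pro:2D}, is that $U=Y\setminus D = \widetilde Y\setminus \widetilde D'$ up to a small set, so it suffices to cover $\widetilde Y\setminus\widetilde D'$ by torus charts.

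Next I would compare the two boundaries: $g^{-1}(\Delta)$ and $\widetilde D'$. Both are anticanonical (the toric boundary $\Delta$ is anticanonical on $\overline{\TT^\star}$, hence $g^*\Delta$ is anticanonical on $\widetilde Y$; and $\widetilde D'$ is too), so their difference is a principal divisor, and away from a small set the two open complements agree — this is exactly where condition~(3), that $j$ is a \emph{near-inclusion}, is used. Thus $U$ is nearly covered by the $g$-preimages of the standard torus charts on $\overline{\TT^\star}$, i.e.\ by the preimages of the open subsets obtained by deleting all-but-one of the toric boundary divisors. The finitely many maximal cones of the fan of $\overline{\TT^\star}$ give finitely many such charts $\TT^\star\hookrightarrow \overline{\TT^\star}$, and composing with (a birational modification of) $g^{-1}$ produces candidate near-inclusions $j_m\colon\TT^\star\dasharrow U$.

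To arrange~(2), that each $j_0^{-1}\circ j_m$ is a \emph{single} mutation rather than an arbitrary birational toric self-map, I would not pass to a full toric compactification at once but instead build the charts one elementary step at a time: run a $(K_{\widetilde Y}+\widetilde D')$-trivial relative MMP over a suitable base, where each step is a flip or divisorial contraction that is volume-preserving for the Calabi--Yau pair (using \cite{MR3504536}, as flagged in Remark~\ref{rem:clusters}(2)), and identify each step on the open torus with an algebraic mutation $\mu_{u,h}$ in the sense of Definition~\ref{dfn:alg_mutation_data} — the irreducible (or prime-power) Laurent polynomial $h$ being read off from the equation of the relevant exceptional/flipped divisor restricted to $\TT^\star$. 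Iterating, the composite $j_0^{-1}\circ j_m$ along each branch is a composition of mutations, and by refining the tree one gets, for each codimension-$1$ point of $U$ not covered by the reference chart, a neighbouring chart differing by exactly one mutation. The main obstacle is precisely this last bookkeeping: controlling the MMP so that the exceptional data at each step is genuinely of mutation type (one primitive direction $u\in M$, one irreducible-or-prime-power $h\in\CC[u^\perp\cap N]$) and that finitely many such steps suffice to reach a large open subset — termination and the irreducibility/prime-power condition are the delicate points, and in dimension $\geq 3$ one must allow the $j_m$ to be honest near-inclusions rather than embeddings, which is the reason Definition~\ref{dfn:near_inclusion} was set up as it was.
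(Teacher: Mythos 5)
There is a genuine gap, and it sits at the heart of the argument. Your plan is to nearly cover $U$ by $g$-preimages of ``standard torus charts on $\overline{\TT^\star}$'' attached to the maximal cones of the fan. But a toric compactification has only \emph{one} dense torus: the affine pieces attached to maximal cones are affine toric varieties, not tori, and deleting all-but-one boundary divisor does not produce a torus either. So this step manufactures no new torus charts. Worse, if $U$ really were nearly covered by $g^{-1}(\TT^\star)$ --- which is what ``away from a small set the two open complements agree'' would give --- then the reference chart alone would nearly cover $U$ and the theorem would be vacuous. In fact the opposite is the interesting case: $U\setminus j(\TT^\star\setminus Z)$ typically contains prime divisors $E_1,\dots,E_r$, and these are exactly what the extra charts must reach. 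The paper's proof is built on two facts you do not establish: (i) every such $E_m$ has discrepancy $a(E_m,\TT^\star)=0$ --- this is the Claim in the proof of Lemma~\ref{lem:torus_embeddings}, proved by comparing the two crepant boundaries $B_W$ and $D_W$ on a common log resolution and invoking negativity of contractions (your ``difference is a principal divisor'' remark gestures at this but draws the wrong conclusion from it); and (ii) the classification of discrepancy-zero divisorial valuations over the torus (Lemma~\ref{lem:disc_zero_vals}): each one is the exceptional divisor of the blow-up of $kR$, $R=\{g=0\}$, in a single toric boundary divisor, i.e.\ it carries a unique mutation data $(u,g^k)$. Given (i) and (ii), the chart $j_m=j\circ\mu_{u,g^k}$ covers $E_m$ by the elementary-transformation picture of Lemma~\ref{lem:geometric_mutations}, and part~(2) is automatic because each $j_m$ differs from $j$ by exactly that one mutation.

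Your fallback for part~(2) --- a volume-preserving relative MMP whose steps are to be matched with mutations --- does not repair this: even granting termination and the irreducible-or-prime-power condition (which you flag as open), it would only exhibit $j_0^{-1}\circ j_m$ as a \emph{composition} of mutations, whereas the statement requires a single mutation per uncovered divisor. The correct organizing principle is not a run of the MMP but the bijection between uncovered divisors of discrepancy zero and mutation data; I would rework the proof around that.
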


The proof --- given below --- is based on the notion of discrepancy of a divisorial
valuation of the fraction field $\CC(N)$, and the classification of divisorial
valuations of discrepancy $0$ given by Lemma~\ref{lem:disc_zero_vals}
below.

\begin{dfn}
  \label{dfn:discrepancy}

  A \emph{divisorial valuation} of $\CC(N)$ is a rank~$1$ discrete valuation
  corresponding to a prime divisor $E\subset Y$, where
  $Y$ is a normal variety birational to $\TT^\star$. I often abuse
  language and identify $E$ with the associated valuation.

  Choose a proper toric variety $\TT^\star \subset X$ compactifying
  $\TT^\star$, and let $B= X \setminus \TT^\star$ be the toric boundary. We
  have two possibilities:
  \begin{enumerate}[(1)]
  \item $E$ is a divisor on $X$, in which case I take $Y=X$, or
  \item $E$ is not a divisor on $X$. By
  restricting $Y$, we may assume that the birational map
  $Y\dasharrow X$ is a birational morphism $f\colon Y \to X$ whose
  only exceptional divisor is $E\subset Y$.
  \end{enumerate}
In either case we can write:
  \[
K_Y = f^\star (K_X+B) + aE
\]
where the integer $a=a(E, \TT^\star)$ is independent of choices and is
called the \emph{discrepancy} of $E$ over $\TT^\star$.
\end{dfn}

\begin{rem}
  \phantomsection \label{rem:discrepancies}
  \begin{enumerate}[(1)]
  \item In the language of~\cite[Sec.~2.3]{MR1658959}, $a(E,
    \TT^\star) = a (E, X, B)$ where $\TT^\star \subset X$ is any
    proper toric variety with boundary $B=X\setminus \TT^\star$.
  \item The notion is not to be confused with the discrepancy $a(E,W)$
defined in~\cite[Sec.~2.3]{MR1658959}, where $W$ is a variety and $E$
a divisorial valuation of the fraction field $k(W)$ with centre
on $W$. There is no conflict, however, because if both
discrepancies are defined --- that is, if $E$ has centre on
$\TT^\star$ --- then they are equal.
  \end{enumerate}
\end{rem}

\begin{lem} (Classification of divisorial valuations $E$ with $a(E, \TT^\star)=0$.)
  \phantomsection \label{lem:disc_zero_vals}
  \begin{enumerate}[(1)]
  \item Let $(u, g^k)$ be algebraic mutation data where
    $g\in \CC[u^\perp \cap N]$ is irreducible. Denote by
    $\Sigma \subset M$ the fan consisting of the cones
    $\{0\}, \langle u \rangle_+ \subset M_\RR$, and by $(\Xbar, \Bbar)$ the
    associated toric manifold with irreducible boundary divisor
    $\Bbar$ --- note that $\Bbar =\Spec \CC[u^\perp \cap N]$ ---
    and let $R=\{g=0\}\subset \Bbar$. Let $Y\to \Xbar$ be the
    blow up of $kR$, and $E\subset Y$ the exceptional divisor. Then
    $a(E, \TT^\star)=0$.
\item Conversely, let $E$ be a divisorial valuation of $\CC(N)$ of discrepancy $a(E,
  \TT^\star)=0$. There exists an algebraic mutation data $(u, g^k)$
  such that $E$ is given by the above construction.
\end{enumerate}
\end{lem}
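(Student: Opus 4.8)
The plan is to prove part~(1) by a direct toric/blow-up computation and part~(2) by reducing an arbitrary discrepancy-zero valuation to the model produced in part~(1). For part~(1): the pair $(\Xbar,\Bbar)$ is a smooth toric variety (a line bundle over the torus $u^\perp\cap N$, essentially $\Bbar\times\AA^1$ away from the other boundary stratum) with $K_{\Xbar}+\Bbar\sim 0$. The subscheme $kR=k\{g=0\}\subset\Bbar$ is a divisor inside $\Bbar$, hence a codimension-$2$ subscheme of $\Xbar$ that is a nonreduced (when $k>1$) locally complete intersection supported on a smooth subvariety meeting the rest of the boundary transversally. I would compute $a(E,\TT^\star)$ by writing $K_Y=f^\star(K_{\Xbar}+\Bbar)+aE$ and using the standard formula for the discrepancy of a blow-up along a subscheme whose ideal is generated (locally) by $g$ and the coordinate $t$ cutting out $\Bbar$: blowing up the ideal $(t,g^{\,?})$ — one must be slightly careful that ``blow up of $kR$'' means blow up the ideal sheaf $\mathcal{I}_{\Bbar}\cdot(g)^{k}$ or, equivalently after normalization, the valuation $v(t)=1$, $v(g)=k$ scaled so that $E$ is primitive. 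The key point is that the exceptional $E$ of this weighted blow-up is exactly the divisorial valuation with $v_E(x^n)=\max(0,\langle u,n\rangle)\cdot(\text{something})$ twisted by the order of vanishing along $R$; the adjunction/discrepancy bookkeeping then gives $a=(1-1)=0$, because the relative canonical of a blow-up along a Cartier-times-Cartier type ideal picks up exactly the codimension minus one, and here the codimension-$2$ blow-up of a reduced lci subvariety has discrepancy $1$, while the multiplicity-$k$ structure reduces the ``numerator'' so that the net discrepancy relative to the reduced boundary $\Bbar$ is $0$. I would also identify, explicitly, the mutation $\mu_{u,g^k}$: its birational action on $\CC(N)$ is resolved by $Y\to\Xbar$, so that $E$ is the divisor over which the two torus charts $j$ and $\mu_{u,g^k}(j)$ are glued. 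This identification is what makes the statement a genuine refinement of Theorem~\ref{thm:cluster_atlas}.

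For part~(2), let $E$ be any divisorial valuation of $\CC(N)$ with $a(E,\TT^\star)=0$. I would argue as follows. First, $a(E,\TT^\star)=0$ forces $E$ to be a \emph{log canonical place} of $(\Xbar,\Bbar)$ for every toric compactification; by the toric description of lc places (they lie on the dual complex of the boundary, which for a toric pair is just the boundary fan), $E$ cannot have centre inside the open torus with positive discrepancy — so the centre of $E$ on a suitable toric model is a boundary stratum, not a torus point. Choosing the toric model cleverly — refine the fan so that the centre of $E$ is a torus-invariant stratum of codimension~$2$, the intersection of two boundary divisors — one of the two local coordinates is a monomial $x^{-u}$ for a primitive $u\in M$ and, after a coordinate change preserving the torus, $E$ is obtained by a blow-up centred on a subscheme of the boundary divisor $\{x^{-u}=0\}=\Spec\CC[u^\perp\cap N]$. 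The valuation is determined by its values on $x^{-u}$ and on the transverse coordinate; matching these to the data of part~(1) produces the Laurent polynomial $g$ (the equation of the centre inside that boundary divisor) and the integer $k$ (a ratio of the two valuations / the multiplicity). The irreducibility-or-prime-power hypothesis on $g$ is not an extra constraint but a consequence: $E$ is a \emph{single} prime divisor, so the centre of the blow-up must be irreducible, forcing $g$ irreducible up to taking the radical, and the scheme structure forces the power; alternatively, a reducible $g$ would give several exceptional divisors, contradicting that $E$ is one valuation. The combinatorial condition that $a(E,\TT^\star)=0$ (rather than $>0$) is exactly what pins down the weight $k$ relative to the linear form $u$ and rules out, say, ordinary blow-ups with positive discrepancy.

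The main obstacle, I expect, is part~(2): the bookkeeping needed to go from ``an abstract divisorial valuation of the function field'' to ``a blow-up of an explicit subscheme of an explicit toric boundary'' requires choosing the toric model carefully and controlling how the discrepancy transforms under toric modifications and under the blow-up. Concretely, the delicate point is to show that the centre of $E$ can be taken to be codimension~$2$ and of the special product form (monomial times arbitrary Laurent polynomial on the complementary lattice), rather than something more complicated, and that the discrepancy-zero condition rigidly forces the single-blow-up description with no residual exceptional divisors to chase. I would handle this by induction on the number of blow-ups in a log resolution extracting $E$, using the fact (from the toric MMP, or directly) that a discrepancy-zero valuation over a toric pair is a toric blow-up of a toric pair \emph{except} along the one non-toric subscheme $R=\{g=0\}$ inside a boundary divisor — essentially running the argument of Theorem~\ref{thm:cluster_atlas} in reverse. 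Part~(1), by contrast, is a finite explicit computation: choose local coordinates, blow up, read off $a$, and confirm $a=0$.
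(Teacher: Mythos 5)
Your part~(1) is essentially right and matches the paper, which dismisses this direction as ``straightforward'': the exceptional divisor of the blow-up of a codimension-$2$ centre contained with multiplicity one in the boundary divisor picks up $+1$ from $K$ and $-1$ from the boundary, so $a(E,\TT^\star)=0$; the only care needed is the interpretation of ``blow up of $kR$'', which you correctly flag.

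Part~(2) contains a genuine error, and an acknowledged gap at exactly the point where the work is. You propose to ``refine the fan so that the centre of $E$ is a torus-invariant stratum of codimension~$2$, the intersection of two boundary divisors''. This cannot be done: for any $E$ whose associated polynomial $g$ is not a monomial, the centre is the non-toric hypersurface $\{g=0\}$ inside a single boundary divisor, and no toric modification makes it torus-invariant --- if the centre were a torus-invariant stratum, $E$ would be a toric valuation and hence have discrepancy $-1$, not $0$. The paper's reduction is different: toric blow-ups are used only to arrange that the centre $z$ lies in a \emph{unique} boundary component $B_0$ (if $z$ lies in a toric stratum of codimension $\geq 2$, blow up that stratum; the induction terminates because the discrepancy over the ambient toric variety strictly drops). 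Two short arguments then finish: $z$ must have codimension exactly $1$ in $B_0$, since otherwise blowing up $\overline{z}$ forces $a(E,\TT^\star)>0$; and since $z$ lies in no other boundary component it lies in the open torus orbit $\Spec \CC[u^\perp\cap N]\subset B_0$, so $\overline{z}=\{g=0\}$ for an irreducible Laurent polynomial $g$. These are precisely the two points you defer to ``careful bookkeeping'' and ``induction on the number of blow-ups in a log resolution'', so as written the proposal does not contain a proof of~(2). Your observation that irreducibility of $g$ is forced because $E$ is a single prime divisor is correct and consistent with the paper. One further slip: a discrepancy-$0$ valuation is not a log canonical place (those have discrepancy $-1$ and are exactly the toric valuations here); the correct and sufficient reason the centre lies in the boundary is simply that the open torus is smooth.
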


\begin{dfn}
 Let $E$ be as in the lemma above. The pair $(u, g^k)$ is the mutation
 data of $E$. Conversely, $E$ is the divisor of the mutation data $(u,
 g^k)$.
\end{dfn}

\begin{proof} The first part is straightforward, so let us prove
  Part~(2). Let $\TT^\star \subset X$ be any projective toric manifold
  with toric boundary $B$.
  
  If $z=z_X E\in X$ is the centre of $E$ on $X$ ($z$ is a scheme-theoretic
  point), then in fact $z\in B$ --- otherwise $z\in \TT^\star$
  and, because $\TT^\star$ is smooth, we would have
  $a(E,\TT^\star)>0$. Note that $z$ can never be the generic point of a component $B_0$ of
  $B$ because all these have discrepancy $a(B_0,\TT^\star)=-1$.

  I claim that, perhaps after a
  toric blow up, I can arrange that $z\in B_0$ for a
  unique irreducible component $B_0\subset B$. Indeed, failing this,
  $z$ lies in a toric stratum of codimension $\geq 2$, and let
  $f\colon X_1\to X$ be the blow-up of that stratum. Replacing $X$
  with the toric manifold $X_1$, the claim follows by induction since
  $a(E, X_1)<a(E,X)$. The toric divisor $B_0$ does not depend on
  choices, and I identify the corresponding valuation of the fraction
  field $\CC(N)$ with a primitive lattice vector $u \in M$.

  Now in fact $z\in B_0$ has codimension $1$ --- otherwise by
  blowing up the Zariski closure $\overline{z}\subset X$ we see that
  $a(E,\TT^\star)>0$; but note that $B_0$ is a toric variety for the
  torus $\Spec \CC[u^\perp \cap N]$ and by what I said $z$ lies in
  the torus hence the Zariski closure $R=\overline{z}\subset B_0$ is
  the vanishing locus of an irreducible Laurent polynomial
  $g\in \CC[u^\perp \cap N]$. Moreover, we see that $E$
  is obtained by blowing up $kR$ for some $k>0$.
\end{proof}

\begin{lem}
  \label{lem:torus_embeddings}
  Let $U=Y\setminus D$ be a cluster variety with reference chart
  $j\colon \TT^\star \dasharrow U$. If $E\subset U\setminus
  j(\TT^\star\setminus Z)$ is a divisor, then: 
  \begin{enumerate}[(1)]
  \item $a(E,\TT^\star)=0$.
  \item Denote by
    $(u, g^k)$ the mutation data of $E$ and consider the mutation
    $\mu=\mu_{u,g^k}\colon \TT^\star \dasharrow \TT^\star$. The
    rational map $j^\prime = j \circ \mu \colon \TT^\star\dasharrow U$
    is a torus chart.
  \end{enumerate}
\end{lem}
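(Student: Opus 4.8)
The plan is to establish the discrepancy statement (1) first — this is the crux — and then to deduce (2) by a direct computation with the mutation $\mu_{u,g^k}$.

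For (1), I would exploit the Calabi--Yau structure through volume forms. Since $(Y,D)$ is dlt and $K_Y+D$ is Cartier and linearly trivial, $U=Y\setminus D$ has Gorenstein canonical singularities with $\omega_U\cong\oo_U$; fix a nowhere-vanishing $\sigma\in H^0(U,\omega_U)$, and let $\lambda$ be the invariant top form on $\TT^\star$, so that by definition $a(F,\TT^\star)=\mathrm{ord}_F(\lambda)$ for every divisorial valuation $F$ of $\CC(N)$. Because $j$ restricts to an open immersion on $\TT^\star\setminus Z$ with $Z$ of codimension $\ge 2$, and a nowhere-vanishing top form on a torus is a constant times a monomial times $\lambda$, on $\TT^\star$ we have $\sigma = c\,x^{n_0}\lambda$ with $c\in\CC^{\times}$ and $n_0\in N$. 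As $E$ is a prime divisor on $U$ and not a component of $D$, $\sigma$ generates $\omega_U$ at the generic point of $E$, so $\mathrm{ord}_E(\sigma)=0$; hence
\[
a(E,\TT^\star)\;=\;\mathrm{ord}_E(\lambda)\;=\;-\,\mathrm{ord}_E\bigl(x^{n_0}\bigr)\;=\;-\,\langle u_E, n_0\rangle ,
\]
where $u_E\in M$ is the functional $n\mapsto\mathrm{ord}_E(x^n)$. It then remains to show $\mathrm{ord}_E(x^{n_0})=0$, i.e. that the monomial $x^{n_0}$ has neither a zero nor a pole along $E$; equivalently, since $a(E,Y,D)=0$ trivially (as $E\not\subseteq D$ and $(Y,D)$ is log canonical), the content is that $E$ has the same discrepancy over $\TT^\star$ as over the Calabi--Yau pair $(Y,D)$. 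I would prove the two inequalities separately. For $a(E,\TT^\star)\ge 0$: the invariant form $\lambda$ has at worst log poles on every snc compactification of $\TT^\star$, so $a(E,\TT^\star)\ge -1$, with equality only if $E$ is a log-canonical place of a toric compactification $(X,B)$, i.e. a ``toric'' valuation; but such an $E$, occurring as an honest divisor on $U=Y\setminus D$ with $(Y,D)$ dlt and $D$ the full Calabi--Yau boundary, is forced to be a component of $D$, contradicting $E\subseteq U$. For $a(E,\TT^\star)\le 0$: a positive discrepancy would mean $x^{n_0}$ vanishes along $E$, which I would exclude by comparing, on a common resolution $W$ of $Y$ and of a toric compactification $X$, the two boundary pullbacks $D_W$ and $B_W$ — both linearly equivalent to $-K_W$ — whose multiplicities along the strict transform of $E$ are $0$ (since $a(E,Y,D)=0$) and $-a(E,\TT^\star)$; the same point is exactly what is required in the proof of Proposition~\ref{pro:2D}. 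Turning this comparison into an equality of multiplicities is the step I expect to be the main obstacle: although the birational map $X\dasharrow Y$ extending $j$ need not be volume preserving at the components of $B$ or of $D$, it must be shown to be volume preserving at $E$.

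For (2), Part (1) together with Lemma~\ref{lem:disc_zero_vals} gives mutation data $(u,g^k)$ for $E$: with $R=\{g=0\}\subseteq\Bbar\subseteq\Xbar$, the valuation $\mathrm{ord}_E$ is the weighted blow-up of $R$ with $\mathrm{ord}_E(x^{m_0})=k$ (where $\langle u,m_0\rangle=1$), $\mathrm{ord}_E(x^l)=0$ for $l\in u^\perp$, and $\mathrm{ord}_E(g)=1$; in particular $\mathrm{ord}_E(x^n)=k\langle u,n\rangle$, so $u_E=ku$. A direct computation then shows that $\mu=\mu_{u,g^k}\colon\TT^\star\dasharrow\TT^\star$ carries the prime divisor $\{g=0\}$ of the source torus onto the valuation $\mathrm{ord}_E$ of the target torus: indeed $\mathrm{ord}_{\{g=0\}}(\mu^\sharp x^n)=\mathrm{ord}_{\{g=0\}}\!\bigl(x^n(g^k)^{\langle u,n\rangle}\bigr)=k\langle u,n\rangle$ and, since $\mu^\sharp g=g$, $\mathrm{ord}_{\{g=0\}}(\mu^\sharp g)=1$, so the pushed-forward valuation $\mu_{*}\mathrm{ord}_{\{g=0\}}$ has centre $R$ on $\Xbar$ with the same weights as $\mathrm{ord}_E$, hence equals $\mathrm{ord}_E$. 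Consequently $j'=j\circ\mu$ does not contract $\{g=0\}$ — it maps it onto $E$, because $\mathrm{ord}_E$ has centre the prime divisor $E$ on $U$ (as $E\subseteq U$) — and it contracts no other prime divisor of $\TT^\star$ either, since away from $\{g=0\}$ and the codimension-one indeterminacy of $\mu^{-1}$ the map $\mu$ is a local isomorphism and $j$ is a near-inclusion. From the fact that $j'$ contracts no divisor, together with the discrepancy-zero statement of Part (1) (so that $j'$ introduces no divisorial valuation of positive discrepancy), one concludes as in the proof of Proposition~\ref{pro:2D} that $j'$ is an open immersion on a large open of $\TT^\star$ — that is, $j'=\mu_{u,g^k}(j)$ is a torus chart.
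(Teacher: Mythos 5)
Your part~(1) is set up correctly --- reducing $a(E,\TT^\star)=0$ to the statement that $E$ has the same discrepancy over $\TT^\star$ as over the Calabi--Yau pair $(Y,D)$, i.e.\ that the birational map is volume preserving along $E$ --- but you then stop exactly at the crux: you write that ``turning this comparison into an equality of multiplicities is the step I expect to be the main obstacle,'' and you do not supply the argument. This is a genuine gap, and it is the whole content of the lemma. Your auxiliary argument for the inequality $a(E,\TT^\star)\geq 0$ is also not a proof: the assertion that a toric (log canonical place) valuation realized as a prime divisor on $U$ ``is forced to be a component of $D$'' is unjustified, and justifying it amounts to the same volume-preservation statement you are trying to establish. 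The paper closes the gap as follows: take a common log resolution $W$ of a toric compactification $(X,B)$ and of $(Y,D)$, and define the subboundaries $B_W$, $D_W$ by crepant pullback; because $j$ is an open immersion off a codimension-two set, any prime divisor $G\subset W$ whose image in $X$ is a divisor meeting $\TT^\star$ also maps to a divisor meeting $U$, hence cannot appear in $D_W$, so $\Supp f_\star(D_W)\subset B$; since $f_\star(D_W)$ is a subboundary linearly equivalent to $B$, the effective divisor $B-f_\star(D_W)\sim 0$ vanishes, so $f_\star(D_W)=B$; finally $D_W-B_W$ is $f$-exceptional and linearly trivial, hence zero by the negativity lemma. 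This yields $B_W=D_W$, i.e.\ $a(E,\TT^\star)=a(E,Y,D)=0$ for every divisor $E\subset U$ (and in particular your exponent $n_0$ causes no trouble). You should supply this, or an equivalent, argument.

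Your part~(2) is essentially the paper's argument in valuation-theoretic language: you verify that $\mu_{u,g^k}$ carries the divisor $\{g=0\}$ of the source torus to the valuation $\mathrm{ord}_E$, so that $j'=j\circ\mu$ contracts no divisor; the paper phrases the same point via the geometric description of Lemma~\ref{lem:geometric_mutations} ($\varphi(E^+)=E$ and $\varphi(E^-)$ is again a divisor of $U$). That part is fine, though note that it relies on part~(1) --- both to know that $E$ has discrepancy zero and hence admits mutation data at all, and (implicitly, as in the paper) to rule out further contracted or positive-discrepancy divisors when concluding that $j'$ is a near-inclusion --- so the unproved Claim propagates.
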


A crucial ingredient in the proof --- given below --- of the lemma is a well-known
geometric description of algebraic mutations. Because this description
is of independent interest I state it here (without proof):

\begin{lem}
  \label{lem:geometric_mutations}
  Fix algebraic mutation data $(u, h)$ consisting of a primitive vector $u\in M$ and
Laurent polynomial $h=\in \CC[u^\perp]\subset \CC[N]$. As in
Definition~\ref{dfn:alg_mutation_data}, the data induce a birational map 
\[
\mu=\mu_{u,h} \colon \TT^\star \dasharrow \TT^\star
\]
of the dual torus $\TT^\star=\Spec \CC[N]$ by setting:
\[
\mu^\sharp \colon x^n \mapsto x^n h^{\langle u,n\rangle}
\]
The map $\mu$ has the following geometric description. Consider the fan
\[
  \Xi=\{(0), \RR_+u, \RR_-u \}
\]
in $M$ and let $\cA$ be the corresponding
    toric variety (for the dual torus $\TT^\star$). There is a commutative
    diagram: 
\[
\xymatrix{ &\widetilde{\cA} \ar[dl]_p\ar[dr]^q & \\\cA
      \ar[d]_\pi \ar@{-->}[rr]^{\mu
}& & \cA\ar[d]^\pi\\ \TT^\star/u & & \TT^\star/u}
\]
where
\begin{itemize}
\item I interpret $u\in M=\Hom
    (\CC^\times, \TT^\star)$ as a $1$-parameter subgroup of $\TT^\star
    = \Spec \CC[N]$ and I denote by $\TT^\star/u$ the
    quotient. The natural morphism $\pi \colon \cA \to \TT^\star/u$ makes
  $\cA$ into a $\PP^1$-bundle over $\TT^\star/u$. Denoting by $D^+$,
  $D^- \subset \cA$ the two boundary divisors corresponding to the
  generators $u$, $-u$, note that $\pi$ maps both $D^+$ and $D^-$
  isomorphically to $\TT^\star/u$.
\item Note that $h\in \CC[u^\perp]=\CC[\TT^\star/u]$ is a function on
  $\TT^\star/u$. Write $h=g^k$ where $g$ is irreducible, and 
  let $R=(g=0)\subset \TT^\star/u$ and write $R^+=\pi^\star
  (R)\cap D^+$ and $R^-=\pi^\star (R)\cap D^-$. Then $p$ is
  the blow up of $kR^+\subset \cA$, and $q$ is the blow up of
  $kR^-\subset \cA$. Denoting by $E^+$ the $p$-exceptional divisor and by
  $E^-$ the $q$-exceptional divisor we have:
  \[
    p(E^+)=R^+ \; \text{and} \;\; p(E^-)=\pi^{-1} (R)
    \quad \text{and} \quad
        q(E^-)=R^- \; \text{and} \;\; q(E^+)=\pi^{-1} (R)
   \]
\end{itemize} \qed
\end{lem}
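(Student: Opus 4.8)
The plan is to reduce the statement to an explicit toric computation in coordinates adapted to $u$, and then to check that a single blow-up on each side resolves $\mu$. First I would pick $m_0\in N$ with $\langle u,m_0\rangle=1$ together with a splitting $N=\ZZ m_0\oplus(u^\perp\cap N)$; writing $t=x^{m_0}$, this identifies $\CC[N]=\CC[t^{\pm 1}]\otimes_\CC\CC[u^\perp\cap N]$, and the defining formula $\mu^\sharp(x^n)=x^nh^{\langle u,n\rangle}$ collapses to $\mu^\sharp(t)=th$ with $\mu^\sharp$ the identity on $\CC[u^\perp\cap N]=\CC[\TT^\star/u]$; in particular $\mu$ lies over $\TT^\star/u$, which takes care of half of the commutativity of the diagram. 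The same splitting --- legitimate since $u$ is primitive --- identifies $\cA$ with the product $\PP^1\times(\TT^\star/u)$, with $t$ (and $s=1/t$) the fibre coordinate, with $D^+$, $D^-$ the two sections $\{t=0\}$, $\{s=0\}$ attached to the rays $\RR_\pm u$, and with $\pi^\star R=\PP^1\times R$, where $R=(g=0)$ and $h=g^k$. In these terms $\mu$ is ``fibrewise multiplication by $h$'': over $(\TT^\star/u)\setminus R$ it is the automorphism of $\PP^1$ fixing $D^+$ and $D^-$ and scaling the fibre by the value of $h$, and it degenerates along $R$.

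Next I would pin down the indeterminacy: reading $\mu$ in the two charts, it extends to a morphism off exactly one of $R^\pm$ --- say $R^+$, with the orientation convention in force in the statement --- contracting $\pi^\star R$ onto $R^-$, while along $R^+$ it is genuinely indeterminate; and since $\mu^{-1}=\mu_{-u,h}$, the symmetric analysis shows $\mu^{-1}$ is indeterminate precisely along $R^-$ and likewise contracts $\pi^\star R$. I then let $p\colon\widetilde{\cA}\to\cA$ be the blow-up of the ideal sheaf $\mathcal I_{D^+}+\mathcal I_{\pi^\star R}^k$ --- a precise incarnation of ``the blow-up of $kR^+$'' --- which is an isomorphism over $\cA\setminus R^+$. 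Writing $s_+$ for a local equation of $D^+$ and $g$ for one of $R$, this blow-up is covered by the two affine charts $\Spec\oo_\cA[g^k/s_+]$ and $\Spec\oo_\cA[s_+/g^k]$, the first being the hypersurface $\{s_+w=g^k\}$ --- which carries a transverse $A_{k-1}$-singularity along a copy of $R$ when $k\ge 2$ --- and the second being smooth; Serre's criterion then gives that $\widetilde{\cA}$ is normal. The one step with real (if routine) content is the chart-by-chart verification that $q:=\mu\circ p$ extends to a morphism $\widetilde{\cA}\to\cA$ and that, read on the target through the same two charts, $q$ is the blow-up of $kR^-$; in particular $\widetilde{\cA}$ is the normalization of the graph of $\mu$, consistent with that graph being the graph of $\mu^{-1}$ with its two factors interchanged. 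Finally the four identities on $p(E^\pm)$ and $q(E^\pm)$ are visible directly in the charts: the $p$-exceptional divisor $E^+$ is mapped by $p$ onto $R^+$ and by $q$ onto $\pi^\star R$, while the strict transform of $\pi^\star R$ is the $q$-exceptional divisor $E^-$, mapped by $p$ onto $\pi^\star R$ and by $q$ onto $R^-$.

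I expect the main obstacle to be pure bookkeeping rather than conceptual difficulty: fixing the orientation of the toric $\PP^1$-bundle (which of $R^\pm$ carries the indeterminacy of $\mu$), and --- only for $k\ge 2$, where $\widetilde{\cA}$ genuinely, if mildly, acquires the $A_{k-1}$-singularity --- making sure that ``$kR^+$'' denotes the ideal $\mathcal I_{D^+}+\mathcal I_{\pi^\star R}^k$ and not, say, a symbolic power of $\mathcal I_{R^+}$, so that $\mu\circ p$ is regular even over the singular locus. Once the conventions are set, the whole lemma reduces to the two explicit affine charts described above.
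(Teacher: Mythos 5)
The paper offers no proof of Lemma~\ref{lem:geometric_mutations} --- it is explicitly ``stated here (without proof)'' as a well-known description --- so there is nothing to compare your argument against; taken on its own terms, your proposal is correct and is the standard argument. The reduction via a splitting $N=\ZZ m_0\oplus(u^\perp\cap N)$ to $\mu^\sharp(t)=th$ over the identity on $\TT^\star/u$, the identification $\cA\cong\PP^1\times(\TT^\star/u)$, and the resolution by a single blow-up read off in the two affine charts $\{s_+w=g^k\}$ and $\{s_+=vg^k\}$ are exactly what is needed. Two of your choices deserve emphasis because they are the only places where something could go wrong: first, interpreting ``the blow up of $kR^\pm$'' as the blow-up of the ideal $\mathcal{I}_{D^\pm}+\mathcal{I}_{\pi^\star R}^k$ is indeed what the paper intends (it matches the construction of the divisor $E$ in Lemma~\ref{lem:disc_zero_vals}), and your normality check for $\{s_+w=g^k\}$ via Serre's criterion is the one genuinely necessary verification when $k\geq 2$; second, your observation that $\widetilde{\cA}$ is the normalized graph of $\mu$, with $\mu^{-1}=\mu_{-u,h}$ exchanging the two projections, is what makes the four image identities symmetric and immediate.

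One caution on the orientation you deliberately leave open: with the paper's formula $\mu^\sharp(x^n)=x^nh^{\langle u,n\rangle}$ and the standard toric convention that $x^{m_0}$ (with $\langle u,m_0\rangle=1$) \emph{vanishes} along the divisor of the ray $\RR_+u$, the coordinate computation gives $s'\circ\mu=s/h$ on $D^-=\{s=0\}$, so the indeterminacy of $\mu$ sits on $R^-$ and $\pi^\star R$ is contracted onto $R^+$ --- the opposite of the labelling you adopt to match the statement. When you fix conventions you should therefore either swap the roles of $R^+$ and $R^-$ (equivalently, of $p$ and $q$) relative to the statement, or record that the statement's labels describe $\mu^{-1}$. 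This is harmless, since both blow-ups produce the same $\widetilde{\cA}$ and the four identities are permuted accordingly, but it is worth settling explicitly because the proof of Lemma~\ref{lem:torus_embeddings} relies on $\varphi(E^+)=E$ with $E^+$ the divisor extracted over $R^+$.
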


\begin{proof}[Proof of Lemma~\ref{lem:torus_embeddings}]
As before choose a compact toric manifold $\TT^\star \subset X$ with toric boundary
$B\subset X$.

\smallskip

\textbf{Claim} $a(E,\TT^\star)=a(E,X,B)=a(E, Y,D)=0$.

\smallskip

I first conclude the proof assuming the claim. The proof of the Claim is below. 

% The map $X\dasharrow Y$ is volume preserving and hence by
%  \cite[Remark~1.7]{MR3504536} $a(E,\TT^\star)=a(E,X,B)=a(E, Y,D)=0$.

Consider the mutation $\mu=\mu_{u,g^k}\colon \TT^\star \dasharrow \TT^\star$ and the
rational map $j^\prime = j \circ \mu \colon \TT^\star\dasharrow U$. I
claim that in fact $j^\prime$ is a torus chart. I use the notation and statement of
Lemma~\ref{lem:geometric_mutations} with $h=g^k$. Denote by
$\varphi \colon \cA \dasharrow U$ the induced birational map. By
construction, $\varphi(E^+)=E$; and $\varphi (E^-)\subset U$ is also a
divisor: let us denote it by $E_0$. It follows from this that
$\varphi$ is a near-inclusion, and hence so is
$j^\prime \colon \TT^\star \dasharrow U$.

Next I prove the Claim. The proof is in several steps.

\smallskip

\textsc{Step 1: set up} Consider a common log resolution:
\[
  \xymatrix{ & W\ar[dl]_f \ar[dr]^g& \\
    X & & Y
    }
\]
and define the divisors $B_W$, $D_W$ by the formulas:
\begin{align}
  \label{eq:1}
  K_W+B_W&=f^\star (K_X+B) \\
  K_W+D_W&=g^\star (K_Y+D)
\end{align}
The key property to note here is that both $B_W$ and $D_W$ are
\emph{subboundaries} in the sense of Shokurov: in other words, they are
formal linear combinations of prime divisors with coefficients $\leq 1$. This is of course just
another way to say that the pairs $(X,B)$ and $(Y, D)$ have dlt
singularities. The Claim is equivalent to the statement that $B_W=D_W$.

\smallskip

\textsc{Step 2} I show that $\Supp f_\star (D_W)=B$.

\smallskip

Indeed suppose that $G\subset W$ is a prime divisor and that
$f(G)\subset X$ is a divisor. If $f(G)\cap \TT^\star \neq \emptyset$, then
also $f(G)\cap (\TT^\star \setminus Z) \neq \emptyset$, and then,
because $j\colon \TT^\star \setminus Z\hookrightarrow U$ is an open
inclusion, it follows that $g(G)$ is a divisor on $Y$ and $g(G)\cap U
\neq \emptyset$, which means that $G$ can not appear in $D_W$.

\smallskip

\textsc{Step 3} In fact $f_\star (D_W)=B$.

\smallskip

This is because $f_\star (D_W)$ is a subboundary and $f_\star (D_W) \sim B$.

\smallskip

\textsc{Step 4} $D_W=B_W$.

\smallskip

By Step~3, $D_W-B_W\sim 0$ and $D_W-B_W$ is supported on the
$f$-exceptional set. The statement then follows from the negativity of
contractions.\end{proof}

 \begin{proof}[Proof of Theorem~\ref{thm:cluster_atlas}.] Let $E_1,\dots
 E_r\subset U$ be the prime divisors in $U\setminus
 j(\TT^\star\setminus Z)$. For all
 $m$ the previous lemma gives $j_m\colon \TT^\star \dasharrow U$ and it
 is clear from the construction that $\cup_{m=0}^r\, j_m
 (\TT^\star\setminus Z_m)$ is large
 in $U$. \end{proof}

I briefly sketch the next few steps of a theory of cluster varieties.

\smallskip

\begin{dfn}
  \label{dfn:seed}
  A \emph{seed} is a finite set $S$ of discrete valuations $E$ of $\CC(N)$
of discrepancy $a(E,\TT^\star)=0$. Equivalently --- by identifying a
valuation with the mutation data associated to it by
Lemma~\ref{lem:disc_zero_vals} --- a seed is a finite set of mutation data.
\end{dfn}

Consider a cluster variety $U=Y\setminus D$. The reference chart
$j\colon \TT^\star \dasharrow U$ determines a seed $S$ whose elements
are precisely the prime divisors
$E\subset U\setminus j(\TT^\star\setminus Z)$.\footnote{It follows
  that $\rk H^2(\TT^\star)+|S|=\binom{n}{2}+|S|=\rk H^2(U;\ZZ)$. In our context $S$
  can be arbitrarily large.}

\smallskip

Conversely, out of a seed $S=\{(u_m, g_m^{k_m})\mid m=1, \dots, r\}$, I construct
a cluster variety $U=Y\setminus D$, by blowing up appropriately in the
boundary of a toric manifold $X$ where all $u_m$ are boundary
divisors. By blowing up centres one at a time and in any order, it is
not hard to verify inductively by explicit computation that the resulting $Y$ is
$\QQ$-factorial and that the pair $(Y, D)$ has dlt singularities. Note
that $Y$, $D$ and $U=Y\setminus D$ are not uniquely determined by the
seed but depend on the blow up order.

\smallskip

An element $s=(u_m, g_m^{k_m}) \in S$ gives a torus chart
$j_m\colon \TT^\star \dasharrow U$ and, using this chart as
reference chart and going through Theorem~\ref{thm:cluster_atlas}
starting from this reference chart, I construct $U$ out of a new seed
$S^\prime$, which I call the \emph{mutation} of $S$ at $s\in S$.

\subsection{The mutation graph}
\label{sec:mutation-graph}

\begin{dfn}
  \label{dfn:mutation_graph} The \emph{mutation graph} of a cluster variety
  $U=Y\setminus D$ is the graph where
  \begin{enumerate}[(1)]
  \item The set of vertices is the set of torus charts $j\colon \TT^\star
    \dasharrow U$;
  \item Two vertices $j_1\colon \TT^\star \dasharrow U$, $j_2\colon \TT^\star
    \dasharrow U$ are connected by an edge if there is a mutation
    $\mu=\mu_{u, h} \colon \TT^\star \dasharrow \TT^\star$ such that $j_2=j_1\circ
    \mu$. 
  \end{enumerate}
\end{dfn}

\begin{rem}
  \label{rem:connectedness_of_mutation_graph}
  I am grateful to Sean Keel and Yan Zhou for telling me about the
  paper~\cite{MR4074403}, which contains a remarkable example of a
  $6$-dimensional cluster variety where the mutation graph has two
  connected components.

  In all examples I know of cluster varieties that are mirrors of Fano
  varieties as in Conjecture~\ref{conjecture_main}, I only know of one
  connected component of the mutation graph.
  % In the case of cluster varieties that are mirrors of Fano varieties
  % the connected components of the mutation graph may have something to
  % do with the connected components of the non-klt locus in the
  % boundary of the moduli stack of the mirror Fano varieties, via the
  % Ilten pencil construction. 

  % I find this phenomenon in some way reminiscent
  % of the paper~\cite{MR3905556}
\end{rem}

% \subsection{Volume preserving birational maps and torus charts}
% \label{sec:volume-pres-birat}

% Denote by $\SCr_n$ the group of volume-preserving birational maps of
% the $n$-dimensional torus $\CC^{\times \, n}$. One may wonder if
%   $\SCr_n$ is generated by mutations, $\CC^{\times \,n}$ and $\SL_n(\ZZ)$.
% This is known to hold when $n=2$ by work of Usnich (unpublished) and
% Blanc~\cite{MR3080816}.

It turns out that the mutation graph of a cluster surface is always
connected: the following statement is made
in~\cite[Proposition~3.27]{MR4562569}, and proved there in the
Appendix by Wendelin Lutz (correcting an error in the \texttt{arXiv}
version). The proof uses the Sarkisov program for volume-preserving
birational maps~\cite{MR3504536}. The assertion is closely related to
the statement --- first made by Usnich (unpublished) and proved by
Blanc~\cite{MR3080816} --- that the group $\SCr_2(\CC)$ of
volume-preserving birational maps of the $2$-dimensional torus is
generated by mutations, $\CC^{\times \,2}$ and $\SL_2(\ZZ)$.

\begin{pro}
  \label{pro:connectedness}
  The mutation graph of a cluster variety $U=Y\setminus D$ of
  dimension two is connected.
\end{pro}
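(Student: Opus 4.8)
The plan is to reduce the statement to a purely group-theoretic fact about volume-preserving birational self-maps of the two-torus, and then to invoke the theorem of Usnich and Blanc. Let $j_1,j_2\colon \TT^\star\dasharrow U$ be two torus charts. By Proposition~\ref{pro:2D} both are honest open embeddings $\TT^\star\hookrightarrow U$, so $\varphi:=j_1^{-1}\circ j_2$ is a birational self-map of $\TT^\star$. The first thing I would prove is that $\varphi$ is \emph{volume preserving}: for one smooth proper toric compactification $\TT^\star\subset X$ with toric boundary $B$ (equivalently, for every such), the induced birational self-map of the Calabi--Yau pair $(X,B)$ is crepant. In other words, $\varphi$ lies in the group $\SCr_2(\CC)$ of volume-preserving birational self-maps of the two-torus.

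The key point here is that \emph{every} torus chart $j\colon \TT^\star\hookrightarrow U$ induces a crepant birational map $(X,B)\dasharrow (Y,D)$ of Calabi--Yau pairs, for any smooth proper toric $(X,B)$ compactifying $\TT^\star$. This is exactly the Claim established inside Lemma~\ref{lem:torus_embeddings}, and the argument there applies verbatim: on a common log resolution $W$ one sets $K_W+B_W=f^\star(K_X+B)$ and $K_W+D_W=g^\star(K_Y+D)$, observes that $B_W$ and $D_W$ are subboundaries (dlt of $(X,B)$ and of $(Y,D)$), that $\Supp f_\star D_W=B$ because $j$ restricts to an isomorphism between large open sets, hence $f_\star D_W=B$ (a linearly equivalent subboundary), and finally $B_W=D_W$ by the negativity lemma. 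Fixing one $(X,B)$ and composing the crepant map attached to $j_2$ with the inverse of the one attached to $j_1$ gives a crepant self-map of $(X,B)$ restricting to $\varphi$ (independence of the choice of $X$ being clear, since any two toric compactifications of $\TT^\star$ are crepant birational), so $\varphi\in\SCr_2(\CC)$. Conversely $j_1\circ\psi$ is again a torus chart for every $\psi\in\SCr_2(\CC)$, so the set of torus charts on $U$ is precisely $\{\,j_1\circ\psi\mid\psi\in\SCr_2(\CC)\,\}$.

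Now I would invoke the theorem of Usnich (unpublished) and Blanc~\cite{MR3080816}, proved there and reproved in the Appendix to~\cite{MR4562569} by means of the Sarkisov program for volume-preserving birational maps of Calabi--Yau pairs~\cite{MR3504536}: $\SCr_2(\CC)$ is generated by algebraic mutations, by the torus $\CC^{\times 2}=\TT^\star$ acting by translations, and by $\SL_2(\ZZ)$ acting by monomial transformations. Writing $\varphi=\gamma_s\circ\cdots\circ\gamma_1$ as a word in these generators and setting $\psi_i=j_1\circ\gamma_1\circ\cdots\circ\gamma_i$ produces a sequence of torus charts from $\psi_0=j_1$ to $\psi_s=j_2$ in which consecutive charts differ by a single generator. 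When $\gamma_i$ is an algebraic mutation, $\psi_{i-1}$ and $\psi_i$ are joined by an edge of the mutation graph by definition. When $\gamma_i$ is a translation or an element of $\SL_2(\ZZ)$, the charts $\psi_{i-1}$ and $\psi_i=\psi_{i-1}\circ\gamma_i$ have the same image in $U$ and differ only by the automorphism $\gamma_i$ of $\TT^\star$, which alters none of the relevant geometry; identifying torus charts that differ by an automorphism of $\TT^\star$ (equivalently, passing to the quotient graph) these steps collapse, and what remains is a genuine path of mutations from $j_1$ to $j_2$.

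I expect the only substantial input to be the generation of $\SCr_2(\CC)$, for which the sole known route is the two-dimensional Sarkisov program for Calabi--Yau pairs; the reduction in the first two steps is formal once Proposition~\ref{pro:2D} is in hand. The genuinely delicate part is the last step — reconciling the abstract group $\SCr_2(\CC)$, the $\Aut(\TT^\star)$-action and the notion of torus chart so as to extract an actual path in the mutation graph — and it is exactly here that the \texttt{arXiv} version of~\cite{MR4562569} had a gap corrected in Lutz's appendix; this is also the step that uses Proposition~\ref{pro:2D} in an essential way, which is why the result is special to dimension two.
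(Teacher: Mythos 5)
The paper itself does not prove this proposition: it cites \cite[Proposition~3.27]{MR4562569}, proved there in the appendix by Lutz via the Sarkisov program for volume-preserving birational maps \cite{MR3504536}, and it is careful to say that the statement is only ``closely related to'' the Usnich--Blanc theorem on generators of $\SCr_2(\CC)$ --- not a consequence of it. Your first two steps are sound and are the natural point of departure: Proposition~\ref{pro:2D} makes both charts honest open embeddings, and the Claim inside the proof of Lemma~\ref{lem:torus_embeddings} does show that every chart is crepant for any toric compactification, so $\varphi=j_1^{-1}\circ j_2$ lies in $\SCr_2(\CC)$.

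The gap is in the last step, and it is fatal as written. It is \emph{not} true that $j_1\circ\psi$ is a torus chart for every $\psi\in\SCr_2(\CC)$, so the set of torus charts is not $\{j_1\circ\psi\mid\psi\in\SCr_2(\CC)\}$. By Lemma~\ref{lem:torus_embeddings} and Theorem~\ref{thm:cluster_atlas}, $j_1\circ\mu_{u,h}$ is a torus chart exactly when the discrepancy-zero valuation attached to $(u,h)$ by Lemma~\ref{lem:disc_zero_vals} is realized by one of the \emph{finitely many} prime divisors of $U\setminus j_1(\TT^\star\setminus Z_1)$ (their number is bounded by $\rk H^2(U;\ZZ)$, per the footnote after Definition~\ref{dfn:seed}); for any of the infinitely many other mutation data, $\mu_{u,h}$ contracts the divisor $\{h=0\}\subset\TT^\star$ out of the torus with nothing in $U$ to receive it, the irregular set of the composite has codimension one, and the composite is not a near-inclusion. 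Consequently the partial compositions $\psi_i$ obtained from an arbitrary Blanc factorization of $\varphi$ need not be vertices of the mutation graph at all, and your ``path'' need not exist. What is required is a factorization of $\varphi$ adapted to the target pair $(Y,D)$, in which each elementary step untwists the map through an actual boundary divisor of $U$ --- this is precisely what running the volume-preserving Sarkisov program relative to $(Y,D)$ achieves in Lutz's appendix to \cite{MR4562569}, and it is exactly the step at which the \texttt{arXiv} version of that paper went wrong. You correctly flag this as the delicate point, but the resolution you offer (every $j_1\circ\psi$ is a chart; quotient by $\Aut(\TT^\star)$) does not close it.
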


\begin{cor}
  \label{cor:ofAandB} Let $P$, $P^\prime$ be Fano polygons. If there
  is a family $f\colon \fX\to T$ of del Pezzo surfaces with klt
  general fibre of which $X_P$ and $X_{P^\prime}$ are fibres, then
  there exist Ilten pencils
  \[
    f_0\colon \fX_0 \to \PP^1, \dots, f_k\colon \fX_k \to \PP^1
  \]
  where $f_0^\star (0)=X_P$, $f_k^\star (\infty)=X_{P^\prime}$, and for all
  $m=1, \dots, k$ $f_m^\star (0)=f_{m-1}^\star (\infty)$. 
\end{cor}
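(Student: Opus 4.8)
The plan is to derive Corollary~\ref{cor:ofAandB} from Proposition~\ref{pro:connectedness} together with the dictionary between cluster surfaces and Fano polygons that has been set up in the excerpt. First I would observe that by Proposition~\ref{pro:2D}, in dimension two every torus chart is an honest open embedding and the irregular sets $Z_m$ are empty, so the near-atlas of Theorem~\ref{thm:cluster_atlas} is a genuine atlas and mutations of charts are genuine birational maps of $\TT^\star$. Next, from the hypothesis that $X_P$ and $X_{P'}$ are both fibres of a single family $f\colon\fX\to T$ of del Pezzo surfaces with klt general fibre, I would build a polarized cluster surface $U=Y\setminus D$ attached to the generic fibre of $f$ — this is exactly the dimension-two case of Conjecture~\ref{conjecture_main}, whose proof is sketched in Sec.~\ref{sec:surfaces} — so that $P$ and $P'$ both appear as $p(j)$ and $p(j')$ for suitable torus charts $j,j'$ on $U$. (Here the polarization $p$ comes from a choice of mobile boundary, as in Remark~\ref{rem:polarisations}.)

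Now I invoke Proposition~\ref{pro:connectedness}: the mutation graph of $U$ is connected, so there is a finite chain of torus charts $j=j^{(0)}, j^{(1)}, \dots, j^{(k)}=j'$ in which each consecutive pair differs by a single combinatorial mutation, say $j^{(m)}=j^{(m-1)}\circ\mu_{u_m,h_m}$. Applying the polarization $p$ and using that $p$ is mutation-preserving (Definition~\ref{dfn:polarisation}), I get Fano polygons $P^{(0)}=P, P^{(1)}, \dots, P^{(k)}=P'$ with $P^{(m)}=\mu_{u_m,\Newt h_m}\bigl(P^{(m-1)}\bigr)$, i.e. each consecutive pair of polygons is related by a combinatorial mutation of Fano polytopes.

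Finally I feed each of these combinatorial mutations $P^{(m-1)}\rightsquigarrow P^{(m)}$ into Theorem~\ref{thm:Ilten-Petracci} to produce an Ilten pencil $f_m\colon\fX_m\to\PP^1$ with $f_m^\star(0)=X_{P^{(m-1)}}$ and $f_m^\star(\infty)=X_{P^{(m)}}$ (after reindexing so that the chain runs $m=1,\dots,k$ with $f_1^\star(0)=X_P$, $f_k^\star(\infty)=X_{P'}$ and $f_m^\star(0)=f_{m-1}^\star(\infty)$ for all $m$). This is exactly the required chain of Ilten pencils.

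The main obstacle is the construction in the first paragraph: one must actually produce the polarized cluster surface realizing $P$ and $P'$ as values of its polarization from the mere existence of the family $f$. This is not formal — it is the substance of the dimension-two case of the main conjecture — and relies on the work attributed to Kasprzyk--Nill--Prince, Lutz and Hacking via Lai--Zhou that is surveyed in Sec.~\ref{sec:surfaces}; once that is granted, the connectedness of the mutation graph and Theorem~\ref{thm:Ilten-Petracci} close the argument with no further difficulty. A minor point to check along the way is that the combinatorial mutations extracted from $p$ applied to the chart-mutations are genuinely \emph{defined} as mutations of Fano polytopes (the footnote condition in Definition~\ref{dfn:combinatorial_mutation}), but this is automatic since they arise as $\Newt h_m$ of mutable Laurent polynomials, by Remark~\ref{rem:combinatorial_mutations}(1).
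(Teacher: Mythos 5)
Your proposal is correct and follows essentially the same route as the paper: invoke the dimension-two case of Conjecture~\ref{conjecture_main} to realize $P$ and $P^\prime$ as polarization values of torus charts on a single polarized cluster surface, use Proposition~\ref{pro:connectedness} to connect the charts by a chain of algebraic mutations, push the chain through the mutation-preserving polarization to get combinatorial mutations of polygons, and apply Theorem~\ref{thm:Ilten-Petracci} to each step. You also correctly identify that the only non-formal input is the surface case of the main conjecture (Theorem~\ref{thm:surface_case}), which is exactly what the paper relies on.
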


\begin{proof}
  Let $X$ be a generic del Pezzo surface with klt singularities of which both $X_P$ and
  $X_{P^\prime}$ are specialisations.  
  By Conjecture~\ref{conjecture_main}, there exists a polarised cluster surface
  $U=Y\setminus D$ with polarisation $p$ and torus charts
  $j \colon \TT^\star \dasharrow U$, $j^\prime \colon \TT^\star
  \dasharrow U$ such that $p(j)=P$, $p(j^\prime)
  =P^\prime$. By Proposition~\ref{pro:connectedness}, there exist  torus charts
  \[
    j_0, \dots, j_k \colon \TT^\star \dasharrow U
  \]
  where $j_0=j$, $j_k=j^\prime$, and for all $m=1,
  \dots, k$ $j_m^{-1}j_{m-1}$ is an (algebraic) mutation, that is,
  there exist algebraic mutation data $(u_m, h_m)$ such that
  \[
j_m^{-1}j_{m-1} = \mu_{u_m, h_m}
\]
According to Conjecture~\ref{conjecture_main}, the polarisation is
mutation-preserving, that is, writing $H_m=\Newt h_m$ and
$P_m=p(j_m)$, we have: 
\[
  P_m =\mu_{u_m, H_m} (P_{m-1})
\]
Theorem~\ref{thm:Ilten-Petracci} then provides the required family
$f_m\colon \fX_m\to \PP^1$ such that $f_m^\star (0) = X_{P_{m-1}}$ and
$f_m^\star (\infty) = X_{P_{m}}$.
\end{proof}

The corollary seems to assert a sort of \textsl{Shokurov connectedness
  principle} for the non-klt locus in the boundary of the moduli stack
of del Pezzo surfaces.

\section{The case of surfaces}
\label{sec:surfaces}

In this section, I sketch a proof of Conjecture~\ref{conjecture_main}
for surfaces. The proof rests heavily on the work of
Gross--Pandharipande--Siebert~\cite{MR2667135},
Gross--Hacking--Keel~\cite{MR3415066} and
Lai--Zhou\cite{https://doi.org/10.48550/arxiv.2201.12703}. Lemma~\ref{lem:GHKfamily}
below states everything that I need from these papers.

\medskip

\paragraph{\textbf{The mirror surface}} Let $0\in P\subset N$ be a Fano lattice polygon and
$X_P$ the corresponding toric surface --- a toric surface for the
torus $\TT=\Spec \CC[M]$. It is known, see for
example~\cite[Lemma~6]{MR3430830}, that qG-deformations of $X_P$ are
unobstructed: thus, there is a unique generic del Pezzo surface of
which $X_P$ is a toric specialization, and we prove
Conjecture~\ref{conjecture_main} for this generic del Pezzo surface.

The proof is based on the following construction, starting from $P$,
of the mirror cluster surface. Denote by
$(\overline{Y}_P, \overline{D}_P)$ the polarized toric surface with
moment polygon $P$ --- this is a toric surface for the dual torus
$\TT^\star=\Spec \CC[N]$. Let $E_1, \dots, E_r$ be the edges of $P$ in
some order. Abusing notation, I also denote by $E_i$ the corresponding
boundary component $E_i\subset \overline{D}_P$. Note that this
boundary component is canonically isomorphic to $\PP^1$: let us denote
by $x_i\in E_i$ the point $(-1:1)\in \PP^1$. As an edge of $P$, $E_i$
has an integral length $l_i$ and an integral distance from the origin
$r_i$: write $l_i=k_ir_i+\overline{r}_i$ with
$0\leq \overline{r}_i<r_i$.\footnote{Denote by
  $C_i=\langle E_i\rangle_+\subset N$ the cone spanned by
  $E_i$. In~\cite{MR3430830} $C_i$ is said to be of class $T$ if
  $\overline{r}_i=0$. In general, $k_i$ is the number of primitive
  $T$-cones of $P$. The polygon $P$ is of class $T$ if all
  $\overline{r}_i=0$: this is so if and only if the surface $X_P$ is
  qG-smoothable.} The cluster surface $Y_P$ is the surface obtained by
blowing up the subscheme
  \[
  Z_P=  \sum_{i=1}^r k_ix_i\subset \overline{D}_P
  \]
  in $\overline{Y}_P$; the boundary $D_P\subset Y_P$ is the strict
  transform. The inverse image of the dual torus $\TT^\star$ is the
  reference torus
  chart, and I denote it by $j\colon \TT^\star \hookrightarrow U_P=Y_P\setminus
  D_P$. Next I will use maximally
  mutable Laurent polynomials~\cite[Def.~4]{MR3430830} and
  [\emph{ibid.}~pp. 517--518] to construct a polarization on $Y_P$ by the method
  of Remark~\ref{rem:polarisations}. Let $f$ be a generic maximally
  mutable Laurent polynomial; then $f$ defines a regular function
  \[
\widetilde{f}\colon U_P \to \AA^1
  \]
and for all torus charts $j^\prime\colon \TT^\star \hookrightarrow U_P$ I
define the polarization by setting
\[
  p(j^\prime)=\Newt (\widetilde{f}\circ j^\prime)
\]
The function $p$ is mutation-preserving by
Remark~\ref{rem:combinatorial_mutations}(1), so indeed it is a
polarization.

\begin{dfn}
  Let $\mathbf{0} \in P \subset N$ be a Fano lattice polygon. The
  \emph{mirror surface} is the polarized cluster surface $(Y_P, D_P)$
  just constructed.
\end{dfn}

It is easy to see that the mirror surface depends only on the
combinatorial mutation class of the polygon $P$.

\begin{lem}
  \label{lem:connectA} In dimension $n=2$, Conjecture~\ref{conjecture_main} is equivalent to
  \cite[Conjecture~A]{MR3430830}.
\end{lem}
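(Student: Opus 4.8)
The plan is to reduce both statements to assertions about Fano polygons modulo combinatorial mutation, using as a fulcrum the ``easy half'' of \cite[Conjecture~A]{MR3430830} --- that mutation-equivalent polygons give qG-deformation-equivalent toric del Pezzo surfaces --- which is unconditionally true by Theorem~\ref{thm:Ilten-Petracci}. Recall that \cite[Conjecture~A]{MR3430830} asserts that Fano polygons $P$ and $P'$ are mutation equivalent if and only if $X_P$ and $X_{P'}$ are qG-deformation equivalent. Since qG-deformations of a toric del Pezzo surface are unobstructed \cite[Lemma~6]{MR3430830}, each $X_P$ determines a well-defined generic del Pezzo surface $X^P$ of which it is a toric specialization, and (again by Theorem~\ref{thm:Ilten-Petracci}) every $X_Q$ with $Q$ mutation equivalent to $P$ is a toric specialization of $X^P$. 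In this language the content of \cite[Conjecture~A]{MR3430830} is the single statement: for every Fano polygon $P$, the toric specializations of $X^P$, up to isomorphism, are exactly the $X_Q$ with $Q$ mutation equivalent to $P$.

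The central step is to identify the image of the polarization $p$ of the mirror surface $U_P=Y_P\setminus D_P$:
\[
\bigl\{\, p(j)\ :\ j \text{ a torus chart on } U_P \,\bigr\}
\ =\ \bigl\{\, Q\ :\ Q \text{ a Fano polygon mutation equivalent to } P \,\bigr\}.
\]
For the inclusion ``$\subseteq$'', the reference chart $j_0$ has $p(j_0)=\Newt f=P$ because $f$ is a generic maximally mutable Laurent polynomial with Newton polygon $P$ \cite[Def.~4]{MR3430830}; the polarization $p$ is mutation preserving; and by Proposition~\ref{pro:connectedness} every torus chart of the cluster surface $U_P$ is joined to $j_0$ by a chain of mutations, each of which --- by Remark~\ref{rem:combinatorial_mutations}(1) --- moves the polarization by a combinatorial mutation. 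For the inclusion ``$\supseteq$'' one must show that whenever a combinatorial mutation $\mu_{u,H}$ is defined on a polygon $Q=p(j)$ already in the image, there is a divisor $E\subset U_P\setminus j(\TT^\star)$ whose algebraic mutation data $(u,g^k)$ has $\Newt(g^k)=H$; granting this, Lemma~\ref{lem:torus_embeddings} shows that $j'=j\circ\mu_{u,g^k}$ is again a torus chart (in dimension $2$ automatically an open embedding, Proposition~\ref{pro:2D}), and then $p(j')=\mu_{u,H}(Q)$ by the mutation-preserving property, so the image of $p$ is closed under combinatorial mutation and therefore contains the whole mutation class of $P$.

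Granting the displayed equality, the equivalence is formal. Assume \cite[Conjecture~A]{MR3430830} and let $X$ be a generic del Pezzo surface with klt singularities admitting a toric specialization (both conjectures concern only such surfaces, so there is no loss); write $X=X^P$ and take for the polarized cluster variety required by Conjecture~\ref{conjecture_main} the mirror surface $U_P$ with the polarization $p$. By the displayed equality the set of $X_{p(j)}$ up to isomorphism equals the set of $X_Q$ with $Q$ mutation equivalent to $P$, which by \cite[Conjecture~A]{MR3430830} is the set of toric specializations of $X$; this is Conjecture~\ref{conjecture_main} for $X$. Conversely, assume Conjecture~\ref{conjecture_main} in dimension $2$ and let $P,P'$ be Fano polygons with $X_P,X_{P'}$ qG-deformation equivalent, say both toric specializations of the generic del Pezzo surface $X$; let $U=Y\setminus D$ with polarization $p$ be a polarized cluster surface witnessing Conjecture~\ref{conjecture_main} for $X$. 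Then there are torus charts $j,j'$ on $U$ with $X_{p(j)}\cong X_P$ and $X_{p(j')}\cong X_{P'}$; by Proposition~\ref{pro:connectedness} they are joined by a chain of mutations, so $p(j)$ and $p(j')$ are mutation equivalent, and hence so are $P$ and $P'$ (the polygons $p(j)$ and $P$, being anticanonical polytopes of isomorphic toric varieties, are $\mathrm{GL}_2(\ZZ)$-equivalent, in particular mutation equivalent). Together with the unconditional reverse implication (Theorem~\ref{thm:Ilten-Petracci}) this is \cite[Conjecture~A]{MR3430830}.

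I expect the main obstacle to be the inclusion ``$\supseteq$'' above: that every combinatorially admissible mutation of every polygon in the orbit is realized by an honest torus chart of $U_P$ --- equivalently, that a \emph{generic} maximally mutable Laurent polynomial stays mutable along all such mutations and has the expected Newton polygon after each. This is precisely where the substantive input of the section lies: the construction and stability properties of maximally mutable Laurent polynomials from \cite{MR3430830}, fed by the scattering-diagram machinery of Gross--Pandharipande--Siebert, Gross--Hacking--Keel and Lai--Zhou packaged in Lemma~\ref{lem:GHKfamily} below; it is the same input needed to prove Conjecture~\ref{conjecture_main} itself in dimension~$2$. The only other non-formal ingredient, used in both directions, is the connectedness of the mutation graph of a cluster surface (Proposition~\ref{pro:connectedness}).
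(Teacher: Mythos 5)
Your proposal is correct and follows essentially the same route as the paper: one direction rests on the mirror surface construction together with the fact that mutation-equivalent polygons yield torus charts on the same polarized mirror surface, and the converse rests on Proposition~\ref{pro:connectedness} plus the mutation-preserving property of the polarization. The one step you flag as the main obstacle --- that every combinatorial mutation of a polygon in the image of $p$ is realized by an honest torus chart with the expected polarization value --- is exactly the point the paper disposes of with the remark that the (polarized) mirror surface depends only on the combinatorial mutation class of $P$, so your treatment is, if anything, more explicit about where the maximally-mutable-Laurent-polynomial input enters.
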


\begin{proof}
  Recall the statement of \cite[Conjecture~A]{MR3430830}: there is a
  canonical injective set-theoretic function $\iota\colon \fP \to \fF$
  where $\fP$ is the set of mutation equivalence classes of Fano
  lattice polygons and $\fF$ is the set of deformation classes of
  locally qG-rigid orbifold del Pezzo surfaces, where --- as explained
  in~\cite{MR3430830} --- the function $\iota$ maps a lattice polygon $P$ to a
  generic qG-deformation of the toric surface $X_P$ --- or indeed any
  locally qG-rigid qG-deformation of $X_P$. To say that the function
  is injective is to say that, given two Fano lattice polygons $P_1$,
  $P_2$, if the surfaces $X_{P_1}$, $X_{P_2}$ have a common locally
  qG-rigid qG-deformation, then there exists a chain of combinatorial
  mutations starting from $P_1$ and ending at $P_2$.

\smallskip
  
Let us prove Conjecture~\ref{conjecture_main} from this. Let
$P\subset N$ be a Fano polygon and denote by $(Y_P,D_P)$ the mirror
surface. Let $P^\prime$ be another Fano lattice polytope and assume
that $X_{P^\prime}$ and $X_P$ are specializations of the same generic
(necessarily locally qG-rigid) del Pezzo surface.  By
\cite[Conjecture~A]{MR3430830}, the polygons $P$ and $P^\prime$ are
mutation equivalent and hence they give torus charts on the same
mirror surface.

  \smallskip

  Conversely, assume Conjecture~\ref{conjecture_main}. I want to show that the
  function $\iota: \fP \to \fF$ is injective. Let $P_1$ and
  $P_2$ be Fano lattice polygons. If $X_{P_1}$ and $X_{P_2}$ are
  qG-deformation equivalent, then they are both specializations of the
  same generic del Pezzo surface $X$. Let
  $U=Y\setminus D$ be the polarized cluster variety whose existence is
  guaranteed by Conjecture~\ref{conjecture_main} applied to $X$. There are torus
  charts $j_1\colon \TT^\star \hookrightarrow U$ and $j_2\colon
  \TT^\star \hookrightarrow U$ such that $p(j_1)=P_1$,
  $p(j_2)=P_2$. By Proposition~\ref{pro:connectedness}, there
  exists a sequence of algebraic mutations starting at $j_1$ and
  ending at $j_2$. The corresponding sequence of combinatorial
  mutations starts at $P_1$ and ends at $P_2$.
\end{proof}

\begin{lem}
  \label{lem:GHKfamily}
  Let $P\subset N$ be a Fano polygon not of class $T$, and let
  $(Y_P,D_P)$ be the mirror surface.
  Then $X_P$ and $Y_P$ are qG-deformation equivalent. 
\end{lem}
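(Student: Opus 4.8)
The plan is to extract the lemma from the mirror constructions of Gross--Pandharipande--Siebert, Gross--Hacking--Keel and Lai--Zhou; as the paragraph preceding the lemma indicates, everything needed is contained in their work, and the task is really bookkeeping --- translating their output into the language of the mirror surface $(Y_P, D_P)$ and the polarization $\widetilde{f}$.

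First I would record that $(Y_P, D_P)$ is a Looijenga pair --- $Y_P$ rational, $D_P$ a cycle of rational curves, $K_{Y_P}+D_P\sim 0$ --- presented together with the toric model $(Y_P, D_P)\to(\overline{Y}_P,\overline{D}_P)$, which is a composition of the non-toric blow-ups of the points $x_i$. The hypothesis that $P$ is not of class $T$ is what places this Looijenga pair outside the ``positive'' range treated by Gross--Hacking--Keel alone --- the canonical scattering diagram attached to $(Y_P, D_P)$ is then genuinely infinite --- and it is precisely here that the convergence and consistency results of Lai--Zhou are needed.

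Next I would run the mirror construction on $(Y_P, D_P)$ equipped with the polarization of Remark~\ref{rem:polarisations} coming from $\widetilde{f}$. By Gross--Pandharipande--Siebert the walls of the canonical scattering diagram are governed by (logarithmic) relative Gromov--Witten invariants of $\overline{Y}_P$ blown up along $\overline{D}_P$; by Lai--Zhou the resulting wall structure is consistent and the theta-function structure constants converge, and the polarization promotes the mirror family to a flat qG family $\fX\to T$. One then checks that this family has $Y_P$ as a general fibre and, as a special fibre, the toric Fano surface $X_{\Newt(\widetilde{f})}$; since $\widetilde{f}$ is a generic maximally mutable Laurent polynomial, $\Newt(\widetilde{f})=P$, so the special fibre is $X_P$. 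This exhibits the required qG deformation equivalence between $X_P$ and $Y_P$.

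The crux --- and the reason the lemma rests on Lai--Zhou and not only on Gross--Hacking--Keel --- is twofold: the convergence of the canonical scattering diagram in the non-class-$T$ (infinite) regime, and the verification that the special, polarized fibre of the resulting family is $X_P$ on the nose, rather than some further toric degeneration of it. The latter comes down to matching the ample class implicit in the mirror construction with the divisor associated to $\widetilde{f}$, together with the identification $\Newt(\widetilde{f})=P$; once this is in hand the conclusion is immediate.
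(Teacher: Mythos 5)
Your overall strategy---realize both $X_P$ and $Y_P$ as fibres of the Gross--Pandharipande--Siebert / Gross--Hacking--Keel / Lai--Zhou mirror family attached to $(Y_P,D_P)$---is the same as the paper's, but two of your key assertions run against the actual argument. First, you have the role of the non-class-$T$ hypothesis backwards: the paper uses it to conclude that $(Y_P,D_P)$ \emph{is} positive in the sense of \cite{MR3415066, https://doi.org/10.48550/arxiv.2201.12703}, and it is precisely this positivity that upgrades the formal mirror construction to a bona fide algebraic family $\pi\colon(\fX,\fB)\to\Spec\CC[\NE(Y_P)]$; the role of \cite{https://doi.org/10.48550/arxiv.2201.12703} is essentially to remove the smoothness assumption on $Y$, not to supply convergence in a non-positive, ``genuinely infinite'' regime. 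If the pair were not positive, as you claim, the algebraicity your argument needs would not be available.

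Second, and more seriously, your assertion that the mirror family has $X_{\Newt(\widetilde f)}=X_P$ as a special fibre is exactly the point the paper flags as false: $(X_P,B_P)$ is \emph{not} a fibre of $\pi$ as constructed---only a Mumford degeneration of it is. To see $X_P$ itself one must enlarge the family by attaching the family previously constructed in \cite{MR2667135} (see \cite[Sec.~3.4]{MR3415066} and \cite[Sec.~6.1]{https://doi.org/10.48550/arxiv.2201.12703}), and then check that the enlarged family is qG. You do sense the danger (``on the nose, rather than some further toric degeneration of it''), but your proposed fix---matching the ample class with the divisor of $\widetilde f$ and identifying $\Newt(\widetilde f)=P$---does not produce the missing fibre; the enlargement is the missing idea. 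Relatedly, the polarization $\widetilde f$ plays no role in the paper's proof of this lemma (it belongs to the statement of the conjecture, not to the deformation equivalence), whereas the appearance of $(Y_P,D_P)$ itself as a fibre is a genuine input from \cite[Theorem~1.8]{https://doi.org/10.48550/arxiv.2201.12703}, the dimension-two ``self-mirror'' phenomenon, rather than something one simply reads off as the general fibre.
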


\begin{proof}[Sketch of proof]
  The proof can be extracted from the papers \cite{MR2667135,
    MR3415066, https://doi.org/10.48550/arxiv.2201.12703} with some
  additional work. If $P$ is a polygon not of class $T$, then the
  cluster surface $(Y_P,D_P)$ is positive in the sense of
  \cite{MR3415066, https://doi.org/10.48550/arxiv.2201.12703} and
  hence the theory in \cite{https://doi.org/10.48550/arxiv.2201.12703}
  applies.

Starting from $(Y_P,D_P)$ paper~\cite[Theorem~1.8]{https://doi.org/10.48550/arxiv.2201.12703}
 constructs a ``mirror'' algebraic family of log Calabi--Yau surfaces:
 \[
   \pi \colon (\fX,\fB)\to \Spec \CC[\NE(Y_P)]
 \]
 The positivity of $(Y_P, D_P)$ is used to prove that we have a bona
 fide algebraic family over $\Spec \CC[\NE(Y_P)]$ and not just a
 formal family over a formal completion. That statement assumes that
 $Y$ is smooth but the assumption is not needed.

This is intended to be the mirror family of $(Y_P,D_P)$ and hence it
is intended to be a deformation of $(X_P,B_P)$ (where $B_P$ is the
toric boundary) but, actually, $(X_P,B_P)$ is not a fibre --- though a Mumford degeneration of
 it is. We need to enlarge the family to see the toric degeneration $(X_P,
 B_P)$ appear as a fibre. The relevant construction is carried out
 in~\cite[Sec.~3.4]{MR3415066} and in the corresponding
 place~\cite[Sec.~6.1]{https://doi.org/10.48550/arxiv.2201.12703}. In
 short, the family is enlarged by attaching to it the family
 previously constructed in~\cite{MR2667135}. It can be
   checked that the resulting enlarged family is qG.

 Finally, as stated
 in~\cite[Theorem~1.8]{https://doi.org/10.48550/arxiv.2201.12703}, the
 pair $(Y_P,D_P)$ is a fibre of $\pi$. Quoting verbatim
 from~\cite[pg.\ 74]{MR3415066}, ``The fact that $(Y,D)$ appears as a
 fibre is perhaps a bit surprising as, after all, we set out to
 construct the mirror and have obtained our original surface
 back. Note however that dual Lagrangian torus fibrations in
 dimension~2 are topologically equivalent by Poincar\'e duality, so
 this is consistent with the SYZ formulation of mirror symmetry.''
\end{proof}

\begin{rem}
  \label{rem:argh}
  A direct construction of
  a qG-family of which $X_P$ and $Y_P$ are fibres would let us avoid
  the complicated and convoluted constructions
  of~\cite{MR3415066, https://doi.org/10.48550/arxiv.2201.12703}.
\end{rem}

\begin{thm} (Hacking\footnote{Personal communication at the
\emph{Retrospective Workshop on
Calabi--Yau Varieties: Arithmetic, Geometry and Physics},
Herstmonceux Castle, 20--25 June 2016.}, Kasprzyk--Nill--Prince~\cite{MR3686766},
  Lutz~\cite{https://doi.org/10.48550/arxiv.2112.08246}) 
  \phantomsection \label{thm:surface_case}
  In dimension $n=2$ Conjecture~\ref{conjecture_main} holds.
\end{thm}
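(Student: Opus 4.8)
The plan is to prove Conjecture~\ref{conjecture_main} for surfaces by showing that, for every Fano polygon $P\subset N$, the mirror surface --- i.e.\ the polarized cluster surface $(Y_P,D_P)$ with its polarization $p$ --- is the required object; by Lemma~\ref{lem:connectA} this amounts to proving \cite[Conjecture~A]{MR3430830}. Fix $P$, let $X$ be the unique generic del Pezzo surface of which $X_P$ is a toric specialization (well defined because qG-deformations of $X_P$ are unobstructed), and let $(Y_P,D_P,p)$ be the mirror surface, so that $p(j)=P$ on the reference chart $j\colon \TT^\star\hookrightarrow U_P=Y_P\setminus D_P$. Every generic klt del Pezzo surface admitting a toric specialization is, essentially by definition, of this form. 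It remains to show that $\{X_{p(j')}\mid j'\text{ a torus chart on }U_P\}$ is, up to isomorphism, exactly the set of toric specializations of $X$.

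For the inclusion ``$\subseteq$'': given a torus chart $j'$, Proposition~\ref{pro:connectedness} produces a chain of algebraic mutations from $j$ to $j'$; since $p$ is mutation-preserving, applying $\Newt$ turns this into a chain of combinatorial mutations from $P$ to $p(j')$, and Theorem~\ref{thm:Ilten-Petracci} realizes each step by an Ilten pencil. Concatenating, $X_{p(j')}$ is qG-deformation equivalent to $X_P$, hence a toric specialization of $X$; note that this direction uses only that $X_P$ is a toric specialization of $X$ by the choice of $X$.

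For ``$\supseteq$'': let $X_{P'}$ be a toric specialization of $X$, so that $X_{P'}$ and $X_P$ have a common locally qG-rigid qG-deformation. The heart of the matter --- which is \cite[Conjecture~A]{MR3430830} --- is to conclude that $P$ and $P'$ are mutation equivalent. When neither is of class $T$, Lemma~\ref{lem:GHKfamily} gives $Y_P\sim_{qG}X_P\sim_{qG}X_{P'}\sim_{qG}Y_{P'}$, so the cluster surfaces $Y_P$ and $Y_{P'}$ are qG-deformation equivalent; by the Gross--Hacking--Keel and Lai--Zhou theory their intrinsic cluster (scattering-diagram) structure agrees, so $P$ and $P'$ both occur as torus-chart polygons of a single cluster surface, and Proposition~\ref{pro:connectedness} supplies a mutation chain. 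The class $T$ case is the classical situation of qG-smoothable del Pezzo surfaces, handled by the explicit work of Kasprzyk--Nill--Prince and Lutz. Once $P$ and $P'$ are known to be mutation equivalent, lifting a chain of combinatorial mutations to a chain of algebraic mutations of torus charts on $U_P$ --- each $\mu_{u,H}$ realized, via Remark~\ref{rem:combinatorial_mutations}(1), by an algebraic mutation $\mu_{u,h}$ with $\Newt h=H$ acting on the maximally mutable Laurent polynomial $\widetilde f$ --- yields a chart $j'$ with $p(j')=P'$, whence $X_{P'}=X_{p(j')}$.

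The main obstacle is precisely this string of implications in ``$\supseteq$'': turning a qG-deformation equivalence of toric surfaces into a mutation equivalence of polygons. All the serious input is concentrated here. Lemma~\ref{lem:GHKfamily} itself rests on the Gross--Pandharipande--Siebert and Gross--Hacking--Keel constructions together with Lai--Zhou's upgrade from a formal to an algebraic family --- which needs the positivity of $(Y_P,D_P)$, hence the ``not of class $T$'' hypothesis --- and on checking that the enlarged family is qG; and Proposition~\ref{pro:connectedness} rests on the Sarkisov program for volume-preserving birational maps and Blanc's theorem that $\SCr_2(\CC)$ is generated by mutations, $\CC^{\times\,2}$ and $\SL_2(\ZZ)$. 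Tracking which notion of equivalence --- qG-deformation, isomorphism of log Calabi--Yau pairs, combinatorial mutation --- is in force at each step, and verifying that the scattering-diagram data really is a deformation invariant, is the delicate point; the class $T$ case, though conceptually easier, also has to be reconciled with this picture by hand.
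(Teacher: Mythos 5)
Your overall architecture matches the paper's: reduce to \cite[Conjecture~A]{MR3430830} via Lemma~\ref{lem:connectA}, dispose of the class~$T$ case by the combinatorial classification of Kasprzyk--Nill--Prince and Lutz, and in the non-class-$T$ case use Lemma~\ref{lem:GHKfamily} to pass from $X_{P_1}\sim_{qG}X_{P_2}$ to $Y_{P_1}\sim_{qG}Y_{P_2}$ and Proposition~\ref{pro:connectedness} to finish. But there is a genuine gap at the one step where all the difficulty is concentrated: you must get from ``$Y_{P_1}$ and $Y_{P_2}$ are qG-deformation equivalent'' to ``$P_1$ and $P_2$ are torus-chart polygons on a single cluster surface''. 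Deformation-equivalent varieties are not in general isomorphic, and a cluster surface in the sense of this paper is a specific pair $(Y,D)$ with a specific reference chart, so before Proposition~\ref{pro:connectedness} can be applied you need an actual isomorphism $Y_{P_1}\cong Y_{P_2}$ compatible with the boundaries. Your appeal to the ``intrinsic cluster (scattering-diagram) structure'' agreeing is not an argument: you neither state which invariant is being compared, nor why it is a deformation invariant, nor why its agreement would identify the two surfaces rather than merely their mirror data. You yourself flag this as ``the delicate point'' but do not resolve it.

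The paper closes exactly this gap with a precise mechanism: by construction the mixed Hodge structures $H^2(U_{P_1};\ZZ)$ and $H^2(U_{P_2};\ZZ)$ are split Tate, so the period points of the two Looijenga pairs are trivial, and the Torelli theorem for Looijenga pairs \cite{MR3314827} then forces $Y_{P_1}$ and $Y_{P_2}$ to be isomorphic. Choosing such an isomorphism places both reference charts on one cluster surface, after which Proposition~\ref{pro:connectedness} supplies the mutation chain. If you want to retain your scattering-diagram phrasing you would still have to prove a statement of precisely this Torelli type; as written, the deduction does not go through.
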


\begin{proof}
  I thank Paul Hacking for permitting me to give a sketch of his unpublished
  proof here.

  I invoke Lemma~\ref{lem:connectA} and
  show~\cite[Conjecture~A]{MR3430830}. 

  The case of smooth del Pezzo surfaces follows directly from the
  combinatorial classification of polygons of class~$T$ up to
  mutation, originally due to Kasprzyk, Nill and
  Prince~\cite{MR3686766}; see
  Lutz~\cite{https://doi.org/10.48550/arxiv.2112.08246} for a shorter
  and more conceptual geometric proof.

  From now on I will work with polygons not of class $T$ --- these are
  precisely those for which the generic qG-deformation of the
  associated toric surface is singular.

  Let $P_1$ and $P_2$ be polygons not of class $T$ and assume that the
  surfaces $X_{P_1}$, $X_{P_2}$ are qG-deformation equivalent: I need
  to show that there exists a sequence of combinatorial mutations
  starting at $P_1$ and ending at $P_2$. Consider the mirror surfaces
  $(Y_{P_1}, D_{P_1})$ and $(Y_{P_2}, D_{P_2})$.
  By Lemma~\ref{lem:GHKfamily} $X_{P_1}$ is qG-deformation equivalent
  to $Y_{P_1}$, and $X_{P_2}$ is qG-deformation equivalent
  to $Y_{P_2}$; hence it follows that  $Y_{P_1}$ and $Y_{P_2}$ are qG-deformation equivalent.

  Writing as usual $U_{P_1}=Y_{P_1}\setminus D_{P_1}$, and similarly
  $U_{P_2}=Y_{P_2}\setminus D_{P_2}$, by construction the mixed Hodge structures
  $H^2(U_{P_1};\ZZ)$ and $H^2(U_{P_2};\ZZ)$ are split Tate and hence
  by the Torelli theorem for Looijenga pairs~\cite{MR3314827} the
  surfaces $Y_{P_1}$ and $Y_{P_2}$ are isomorphic. By choosing an
  isomorphism we have 
  that $P_1$ and $P_2$ provide torus charts on the same cluster
  surface. By Proposition~\ref{pro:connectedness}, we see
  that there exists a sequence of combinatorial mutations starting at
  $P_1$ and ending at $P_2$.
\end{proof}

\section{Examples and evidence}
\label{sec:evidence}

\subsection{Evidence from the Fanosearch program}
\label{sec:evid-from-fanos}

Many examples and some of the most striking evidence for the
conjectures come from the Fanosearch program,
see for example~\cite{https://doi.org/10.48550/arxiv.2210.07328, Coates_2021}.

In Fanosearch, we attempt to classify Fano varieties up to qG-deformation by
looking for their toric degenerations in the form of Fano polytopes.

The paper~\cite{Coates_2021} does not mention cluster
varieties. Instead, it defines a class of Laurent polynomials, called
\emph{rigid maximally mutable Laurent polynomials} (rigid MMLP), and
conjectures~\cite[Conjecture~2]{Coates_2021} an injective
set-theoretic function from the set of mutation equivalence classes of
rigid MMLP in three variables to the set of qG-deformation families of $\QQ$-Fano
\mbox{3-folds} (that is, $\QQ$-factorial Fano \mbox{3-folds} with terminal singularities).

Out of a Laurent polynomial $f$, we can always form --- as sketched in
Sec.~\ref{sec:clusters} --- a cluster variety
$U=Y\setminus D$ from the seed
\[
S=\bigl\{ \text{mutation data $s= (u, h)$} \mid \text{$f$ is
  $s$-mutable} \bigr\}
\]
If $f$ is a rigid MMLP, the body of numerical data of the Fanosearch
program supports Conjectures~\ref{conjecture_main} for this cluster
variety (and the connectedness of its mutation graph).

\subsection{Examples from cluster algebras}
\label{sec:example-from-cluster}

Cluster algebras --- see Sec~6.3 of~\cite{https://doi.org/10.48550/arxiv.1608.05735,
  https://doi.org/10.48550/arxiv.1707.07190,
  https://doi.org/10.48550/arxiv.2008.09189,
  https://doi.org/10.48550/arxiv.2106.02160} for an introduction ---
are rings of global functions on special cases of our cluster varieties.

Toric degenerations of Grassmannians, flag varieties, more general
homogeneous varieties, and Schubert-type varieties give rise to
canonical bases of group representations.

There are several statements in the literature where a correspondence
is asserted between sets of toric degenerations of a particular
variety and sets of seeds (i.e. torus charts) of a particular cluster
algebra. The simplest such statement asserts that the homogeneous
coordinate ring of $\Gr(2,n)$ is a cluster algebra whose seeds
correspond to toric degenerations, see for
example~\cite[Sec.~4.2]{MR4270484}. All of these statements give
examples and evidence for our Conjecture~\ref{conjecture_main}.

\subsection{Conformal blocks}
\label{sec:conformal-blocks}

If $C$ is a curve of genus~$g$, denote by $M_C(2,\text{odd})$ the moduli
space of stable rank two vector bundles on $C$ with determinant a line
bundle of odd degree. It is known that $M_C(2, \text{odd})$ is a Fano
variety.

Recall that a stable curve is maximally degenerate if every component
is $\PP^1$ with three distinguished points; in particular, the dual
graph is trivalent. The paper~\cite{MR2928457} constructs, for all
maximally degenerate genus-$g$ stable curves $\Gamma$, a Fano polytope
$P=P_\Gamma$ and toric degeneration
$X_\Gamma= X_\Gamma (2,\text{odd})$. If $\Gamma^\prime$ is obtained
from $\Gamma$ by a mutation --- the dual graphs undergoing a
transition through a $4$-valent graph --- then $P^\prime$ is a
combinatorial mutation of $P$. The
paper~\cite{https://doi.org/10.48550/arxiv.2206.11584} shows --- among
other things --- that there exists a cluster variety $U=Y\setminus D$
and torus charts on $U$ corresponding to these toric degenerations
$X_\Gamma$.

This example is psychologically important because the moduli space of
these Fanos is birational to the moduli space
of curves --- in particular, for large genus, it is far from being
unirational.

\bibliographystyle{plainurl}
%\bibliography{bib_clustersANDfanos}

\begin{thebibliography}{10}

  \bibitem{MR3430830}
Mohammad Akhtar, Tom Coates, Alessio Corti, Liana Heuberger, Alexander
  Kasprzyk, Alessandro Oneto, Andrea Petracci, Thomas Prince, and Ketil
  Tveiten.
\newblock Mirror symmetry and the classification of orbifold del {P}ezzo
  surfaces.
\newblock {\em Proc. Amer. Math. Soc.}, 144(2):513--527, 2016.
\newblock \href {https://doi.org/10.1090/proc/12876}
  {\path{doi:10.1090/proc/12876}}.

\bibitem{AkhCoaGal12}
Mohammad Akhtar, Tom Coates, Sergey Galkin, and Alexander~M. Kasprzyk.
\newblock Minkowski polynomials and mutations.
\newblock {\em SIGMA Symmetry Integrability Geom. Methods Appl.}, 8:Paper 094,
  17, 2012.
\newblock \href {https://doi.org/10.3842/SIGMA.2012.094}
  {\path{doi:10.3842/SIGMA.2012.094}}.

\bibitem{https://doi.org/10.48550/arxiv.2206.11584}
Pieter Belmans, Sergey Galkin, and Swarnava Mukhopadhyay.
\newblock Graph potentials and symplectic geometry of moduli spaces of vector
  bundles, 2022.
\newblock URL: \url{https://arxiv.org/abs/2206.11584}, \href
  {https://doi.org/10.48550/ARXIV.2206.11584}
  {\path{doi:10.48550/ARXIV.2206.11584}}.

\bibitem{MR3080816}
J\'{e}r\'{e}my Blanc.
\newblock Symplectic birational transformations of the plane.
\newblock {\em Osaka J. Math.}, 50(2):573--590, 2013.
\newblock URL: \url{http://projecteuclid.org/euclid.ojm/1371833501}.

\bibitem{MR4270484}
Lara Bossinger, Fatemeh Mohammadi, and Alfredo N\'{a}jera~Ch\'{a}vez.
\newblock Families of {G}r\"{o}bner degenerations, {G}rassmannians and
  universal cluster algebras.
\newblock {\em SIGMA Symmetry Integrability Geom. Methods Appl.}, 17:Paper No.
  059, 46, 2021.
\newblock \href {https://doi.org/10.3842/SIGMA.2021.059}
  {\path{doi:10.3842/SIGMA.2021.059}}.

\bibitem{https://doi.org/10.48550/arxiv.2210.07328}
Tom Coates, Liana Heuberger, and Alexander~M. Kasprzyk.
\newblock Mirror {S}ymmetry, {L}aurent {I}nversion and the {C}lassification of
  {$\mathbb{Q}$}-{F}ano {T}hreefolds, 2022.
\newblock URL: \url{https://arxiv.org/abs/2210.07328}, \href
  {https://doi.org/10.48550/ARXIV.2210.07328}
  {\path{doi:10.48550/ARXIV.2210.07328}}.

\bibitem{Coates_2021}
Tom Coates, Alexander~M. Kasprzyk, Giuseppe Pitton, and Ketil Tveiten.
\newblock Maximally mutable {L}aurent polynomials.
\newblock {\em Proceedings of the Royal Society A: Mathematical, Physical and
  Engineering Sciences}, 477(2254), oct 2021.
\newblock URL: \url{https://doi.org/10.1098%2Frspa.2021.0584}, \href
  {https://doi.org/10.1098/rspa.2021.0584} {\path{doi:10.1098/rspa.2021.0584}}.

\bibitem{MR3504536}
Alessio Corti and Anne-Sophie Kaloghiros.
\newblock The {S}arkisov program for {M}ori fibred {C}alabi--{Y}au pairs.
\newblock {\em Algebr. Geom.}, 3(3):370--384, 2016.
\newblock \href {https://doi.org/10.14231/AG-2016-016}
  {\path{doi:10.14231/AG-2016-016}}.

\bibitem{https://doi.org/10.48550/arxiv.1608.05735}
Sergey Fomin, Lauren Williams, and Andrei Zelevinsky.
\newblock Introduction to {C}luster {A}lgebras. {C}hapters 1-3, 2016.
\newblock URL: \url{https://arxiv.org/abs/1608.05735}, \href
  {https://doi.org/10.48550/ARXIV.1608.05735}
  {\path{doi:10.48550/ARXIV.1608.05735}}.

\bibitem{https://doi.org/10.48550/arxiv.1707.07190}
Sergey Fomin, Lauren Williams, and Andrei Zelevinsky.
\newblock Introduction to {C}luster {A}lgebras. {C}hapters 4-5, 2017.
\newblock URL: \url{https://arxiv.org/abs/1707.07190}, \href
  {https://doi.org/10.48550/ARXIV.1707.07190}
  {\path{doi:10.48550/ARXIV.1707.07190}}.

\bibitem{https://doi.org/10.48550/arxiv.2008.09189}
Sergey Fomin, Lauren Williams, and Andrei Zelevinsky.
\newblock Introduction to {C}luster {A}lgebras. {C}hapter~6, 2020.
\newblock URL: \url{https://arxiv.org/abs/2008.09189}, \href
  {https://doi.org/10.48550/ARXIV.2008.09189}
  {\path{doi:10.48550/ARXIV.2008.09189}}.

\bibitem{https://doi.org/10.48550/arxiv.2106.02160}
Sergey Fomin, Lauren Williams, and Andrei Zelevinsky.
\newblock Introduction to {C}luster {A}lgebras. {C}hapter~7, 2021.
\newblock URL: \url{https://arxiv.org/abs/2106.02160}, \href
  {https://doi.org/10.48550/ARXIV.2106.02160}
  {\path{doi:10.48550/ARXIV.2106.02160}}.

\bibitem{MR3350154}
Mark Gross, Paul Hacking, and Sean Keel.
\newblock Birational geometry of cluster algebras.
\newblock {\em Algebr. Geom.}, 2(2):137--175, 2015.
\newblock \href {https://doi.org/10.14231/AG-2015-007}
  {\path{doi:10.14231/AG-2015-007}}.

\bibitem{MR3415066}
Mark Gross, Paul Hacking, and Sean Keel.
\newblock Mirror symmetry for log {C}alabi--{Y}au surfaces {I}.
\newblock {\em Publ. Math. Inst. Hautes \'{E}tudes Sci.}, 122:65--168, 2015.
\newblock \href {https://doi.org/10.1007/s10240-015-0073-1}
  {\path{doi:10.1007/s10240-015-0073-1}}.

\bibitem{MR3314827}
Mark Gross, Paul Hacking, and Sean Keel.
\newblock Moduli of surfaces with an anti-canonical cycle.
\newblock {\em Compos. Math.}, 151(2):265--291, 2015.
\newblock \href {https://doi.org/10.1112/S0010437X14007611}
  {\path{doi:10.1112/S0010437X14007611}}.

\bibitem{MR3758151}
Mark Gross, Paul Hacking, Sean Keel, and Maxim Kontsevich.
\newblock Canonical bases for cluster algebras.
\newblock {\em J. Amer. Math. Soc.}, 31(2):497--608, 2018.
\newblock \href {https://doi.org/10.1090/jams/890}
  {\path{doi:10.1090/jams/890}}.

\bibitem{MR2667135}
Mark Gross, Rahul Pandharipande, and Bernd Siebert.
\newblock The tropical vertex.
\newblock {\em Duke Math. J.}, 153(2):297--362, 2010.
\newblock \href {https://doi.org/10.1215/00127094-2010-025}
  {\path{doi:10.1215/00127094-2010-025}}.

\bibitem{MR4562569}
Paul Hacking and Ailsa Keating.
\newblock Homological mirror symmetry for log {C}alabi-{Y}au surfaces.
\newblock {\em Geom. Topol.}, 26(8):3747--3833, 2022.
\newblock With an appendix by Wendelin Lutz.
\newblock \href {https://doi.org/10.2140/gt.2022.26.3747}
  {\path{doi:10.2140/gt.2022.26.3747}}.

\bibitem{MR2581246}
Paul Hacking and Yuri Prokhorov.
\newblock Smoothable del {P}ezzo surfaces with quotient singularities.
\newblock {\em Compos. Math.}, 146(1):169--192, 2010.
\newblock \href {https://doi.org/10.1112/S0010437X09004370}
  {\path{doi:10.1112/S0010437X09004370}}.

\bibitem{MR2958983}
Nathan~Owen Ilten.
\newblock Mutations of {L}aurent polynomials and flat families with toric
  fibers.
\newblock {\em SIGMA Symmetry Integrability Geom. Methods Appl.}, 8:Paper 047,
  7, 2012.
\newblock \href {https://doi.org/10.3842/SIGMA.2012.047}
  {\path{doi:10.3842/SIGMA.2012.047}}.

\bibitem{MR3686766}
Alexander Kasprzyk, Benjamin Nill, and Thomas Prince.
\newblock Minimality and mutation-equivalence of polygons.
\newblock {\em Forum Math. Sigma}, 5:Paper No. e18, 48, 2017.
\newblock \href {https://doi.org/10.1017/fms.2017.10}
  {\path{doi:10.1017/fms.2017.10}}.

\bibitem{Kollar}
J\'{a}nos Koll\'{a}r.
\newblock {\em Families of varieties of general type}.
\newblock Unpublished manuscript, 2022.
\newblock URL:
  \url{https://web.math.princeton.edu/~kollar/FromMyHomePage/modbook-final.pdf}.

\bibitem{MR1658959}
J\'{a}nos Koll\'{a}r and Shigefumi Mori.
\newblock {\em Birational geometry of algebraic varieties}, volume 134 of {\em
  Cambridge Tracts in Mathematics}.
\newblock Cambridge University Press, Cambridge, 1998.
\newblock With the collaboration of C. H. Clemens and A. Corti, Translated from
  the 1998 Japanese original.
\newblock \href {https://doi.org/10.1017/CBO9780511662560}
  {\path{doi:10.1017/CBO9780511662560}}.

\bibitem{https://doi.org/10.48550/arxiv.2201.12703}
Jonathan Lai and Yan Zhou.
\newblock Mirror {S}ymmetry for log {C}alabi--{Y}au surfaces {II}, 2022.
\newblock URL: \url{https://arxiv.org/abs/2201.12703}, \href
  {https://doi.org/10.48550/ARXIV.2201.12703}
  {\path{doi:10.48550/ARXIV.2201.12703}}.

\bibitem{MR3472916}
Thomas Lam and Pavlo Pylyavskyy.
\newblock Laurent phenomenon algebras.
\newblock {\em Camb. J. Math.}, 4(1):121--162, 2016.
\newblock \href {https://doi.org/10.4310/CJM.2016.v4.n1.a2}
  {\path{doi:10.4310/CJM.2016.v4.n1.a2}}.

\bibitem{https://doi.org/10.48550/arxiv.2112.08246}
Wendelin Lutz.
\newblock A geometric proof of the classification of {$T$}-polygons, 2021.
\newblock URL: \url{https://arxiv.org/abs/2112.08246}, \href
  {https://doi.org/10.48550/ARXIV.2112.08246}
  {\path{doi:10.48550/ARXIV.2112.08246}}.

\bibitem{MR1116920}
Marco Manetti.
\newblock Normal degenerations of the complex projective plane.
\newblock {\em J. Reine Angew. Math.}, 419:89--118, 1991.
\newblock \href {https://doi.org/10.1515/crll.1991.419.89}
  {\path{doi:10.1515/crll.1991.419.89}}.

\bibitem{MR2928457}
Christopher Manon.
\newblock Coordinate rings for the moduli stack of {$SL_2(\mathbb{C})$}
  quasi-parabolic principal bundles on a curve and toric fiber products.
\newblock {\em J. Algebra}, 365:163--183, 2012.
\newblock \href {https://doi.org/10.1016/j.jalgebra.2012.05.007}
  {\path{doi:10.1016/j.jalgebra.2012.05.007}}.

\bibitem{MR4296370}
Andrea Petracci.
\newblock Homogeneous deformations of toric pairs.
\newblock {\em Manuscripta Math.}, 166(1-2):37--72, 2021.
\newblock \href {https://doi.org/10.1007/s00229-020-01219-w}
  {\path{doi:10.1007/s00229-020-01219-w}}.

\bibitem{MR4074403}
Yan Zhou.
\newblock Cluster structures and subfans in scattering diagrams.
\newblock {\em SIGMA Symmetry Integrability Geom. Methods Appl.}, 16:Paper No.
  013, 35, 2020.
\newblock \href {https://doi.org/10.3842/SIGMA.2020.013}
  {\path{doi:10.3842/SIGMA.2020.013}}.

\end{thebibliography}

\end{document}